\newtheorem{theorem}{Theorem}[section]
\newtheorem{corollary}[theorem]{Corollary}
\newtheorem{definition}[theorem]{Definition}
\newtheorem{example}[theorem]{Example}
\newtheorem{lemma}[theorem]{Lemma}
\newtheorem{proposition}[theorem]{Proposition}
\newtheorem{remark}[theorem]{Remark}
\newenvironment{proof}[1][Proof]{\noindent\textbf{#1.} }{\ \rule{0.5em}{0.5em}}
\definecolor{sepia}{cmyk}{0, 0.83, 1, 0.70}
\begin{document}

\title{\vspace{-3em}\textbf{Regularity of the Product of Two Relaxed Cutters with Relaxation Parameters Beyond Two}}
\author{Andrzej Cegielski\thanks{%
Institute of Mathematics, University of Zielona G\'{o}ra, ul. Szafrana 4a, 65-516, Zielona G\'{o}ra, Poland, \newline
e-mail: \texttt{a.cegielski@im.uz.zgora.pl}}, \
Simeon Reich\thanks{%
Department of Mathematics, The Technion -- Israel Institute of Technology, 32000
Haifa, Israel, \newline
e-mail: \texttt{sreich@technion.ac.il}} \ and \
Rafa\l\ Zalas\thanks{%
Institute of Mathematics, University of Zielona G\'{o}ra, ul. Szafrana 4a, 65-516, Zielona G\'{o}ra Poland and Department of Mathematics, The Technion -- Israel Institute of Technology, 32000
Haifa, Israel, \newline
e-mail: \texttt{r.zalas@im.uz.zgora.pl}}}
%\date{January, 25th, 2024 }
\maketitle

\begin{abstract}
We study the product of two relaxed cutters having a common fixed point. We assume that one of the relaxation parameters is greater than two so that the corresponding relaxed cutter is no longer quasi-nonexpansive, but rather demicontractive. We show that if both of the operators are (weakly/linearly) regular, then under certain conditions, the resulting product inherits the same type of regularity. We then apply these results to proving convergence in the weak, norm and linear sense of algorithms that employ such products.

\bigskip
  \noindent \textbf{Key words and phrases:} Algorithm, demicontraction; metric subregularity; quasi-nonexpansive operator; rate of convergence; regularity.

  \bigskip
  \noindent\textbf{2020 Mathematics Subject Classification:} 47J25, 47J26.
  %47J25    Iterative procedures involving nonlinear operators [See also 47J26, 65J15]
  %47J26  	Fixed-point iterations [See also 47J25]

\end{abstract}

\section{Introduction}
The \emph{common fixed point problem} for two operators $T, U \colon \mathcal H \to \mathcal H$  defined on a real Hilbert space $\mathcal H$ is to
\begin{equation}\label{int:CFPP}
  \text{find a point in } \operatorname{Fix}T \cap \operatorname{Fix} U.
\end{equation}
We assume here that such a point exists and that $T$ and $U$ are $\lambda$ and $\mu$ relaxations of cutters, respectively, where $\lambda, \mu > 0$. Moreover, for the reasons explained below, we will also assume that $\lambda \mu < 4$. For the definition of a cutter see Section \ref{sec:preliminaries}.

The setting of problem \eqref{int:CFPP} is quite general and allows us to encompass various optimization problems. An example of such a situation is when $T$ and $U$ are $\lambda$ and $\mu$ relaxations of some metric projections, that is, when $T = (1-\lambda)I + \lambda P_A$ and $U = (1-\mu)I + \mu P_B$, where $A$ and $B$ are closed and convex subsets of $\mathcal H$. In this case \eqref{int:CFPP} becomes the convex feasibility problem  which is to find a point in $A \cap B$.  Another general example is when $T$ and $U$ are  averaged or even  conically averaged operators \cite{BDP22}. Note, however, that  in our setting of the common fixed point problem, the operators  $T$ and $U$ do not have to be nonexpansive.

A prototypical iterative method for solving problem \eqref{int:CFPP} is defined as follows:
\begin{equation}\label{int:xk}
  x^0 \in \mathcal H, \qquad x^{k+1} := x^k + \alpha_k (UT(x^k) - x^k), \qquad k = 0, 1,2, \ldots,
\end{equation}
where $\alpha_k \in [\varepsilon,1- \varepsilon]$ and $\varepsilon > 0$. Iteration \eqref{int:xk} has its roots in the method of alternating projections and the Krasnosel'ski\u{\i}--Mann method. The first question that we want to address in this paper is as follows:

\begin{center}
    \emph{(Q1) What conditions guarantee weak, norm and linear convergence of method \eqref{int:xk}?}
\end{center}

Note that for $\lambda, \mu \in (0,2)$, the product $UT$ is also a $\nu$-relaxed cutter for some $\nu \in (0,2)$ and that $\operatorname{Fix}UT = \operatorname{Fix} U  \cap \operatorname{Fix} T$; see \cite[Proposition 1(d)]{YO04} or \cite[Theorem 2.1.46]{Ceg12}. It  has only recently been  shown in \cite[Theorem 3.8]{Ceg23} that $UT$ enjoys analogous properties when $\lambda \mu < 4$; see Theorem \ref{t-comp} below. The latter inequality allows one of the relaxation parameters to be greater than two. However, in such a case $\nu > 2$ and $UT$ is no longer quasi-nonexpansive, but rather demicontractive, see Section \ref{sec:RelCutters}. A result closely related to \cite{Ceg23} has been established in \cite{BDP22} for conically averaged operators. We comment on the relation between \cite{Ceg23} and \cite{BDP22} in Remark \ref{rem:BDP22}.

One of the possible answers to question (Q1) has been presented in \cite[Theorem 6.1]{CRZ18}. Roughly speaking, (weak/linear) regularity of the operators $T$ and $U$, when combined with  some additional assumptions,  implies (weak/linear) convergence of method \eqref{int:xk}. We comment briefly on some of these regularities below; see also Section \ref{sec:regular_operators}. There are quite a few results in the literature which fall into the above-mentioned pattern. In particular, for the linear convergence, see \cite{BRZ18, BKRZ19, BB96, BLT17, BNP15, CRZ20, CET21, KRZ17, WHLY17, XC21, ZNLY18}, to name  but a few.  Note, however, that the arguments used in \cite{CRZ18}, as well as those used in the cited works, directly or indirectly, require that $\lambda, \mu \in (0,2)$. Can we thus expect that  a statement similar to the one above continues  to be true when $\lambda \mu < 4$? A positive answer was given in \cite{Ceg23},  but  only in the case of weak convergence.  To the best of our knowledge, the parts of question (Q1) regarding norm and linear convergence have not been considered when $\lambda \mu < 4$.  We return to this below.

Weak regularity of $T$ is oftentimes phrased as the demiclosedness  of the operator $I - T$ at $0$. This is also known as the demiclosedness principle. Such a property is a quite common assumption in the study of fixed point iterations \cite{BC17, Ceg12, Mar77}. Some extensions of this property can be found, for example, in \cite{BRZ19, Bau13, BC01, BCP20, Ceg15}.
Linear regularity of $T$ can  also be  expressed as the metric subregularity of $I - T$ at  a certain  point; see \cite[Definition 3.17]{Iof16} and Remark \ref{rem:MetSub}. Metric subregularity plays an important role in the study of linear convergence of set-valued fixed point iterations \cite{LTT20, LTT18a}. We note that there  also are  other notions of regularity such as H\"{o}lder regularity \cite{BLT17, CET21} and  gauge  regularity \cite{LTT18b}, both of which guarantee norm convergence with some error estimates.

The usefulness of the above-mentioned regularities suggests that a more careful study should be devoted to these properties. This bring us to the main question that we want to address in this paper:

\begin{center}
    \emph{(Q2) Does the (weak/linear) regularity of the individual operators $U$ and $T$ imply the (weak/linear) regularity of the product $UT$?}
\end{center}

The answer to question (Q2) for all three types of regularity is again positive when $\lambda, \mu \in (0,2)$ \cite[Corollary 5.6]{CRZ18}. Analogous results  addressing  some of these regularities can be found in \cite{BRZ18, BKRZ19, Ceg15, CRZ20, CZ14, RZ16}. See also \cite{CET21} for H\"{o}lder regularity. It should  also be noted that \cite{Ceg23} answered question (Q2) for weak regularity when $\lambda \mu < 4$.  To the best of our knowledge, the parts of question (Q2) regarding regularity and linear regularity remain open when $\lambda  \mu < 4$.

The main contribution of  the present  paper is to positively answer questions (Q1) and (Q2) raised above assuming that $\lambda \mu < 4$. In particular, in Theorem \ref{thm:main}, which is the main result of this paper, we answer question (Q2). As a consequence, in Theorem \ref{thm:main2} we also answer question (Q1). In Corollaries \ref{cor:main3} and \ref{cor:main4} we specialized these theorems to orthogonal projections. We also introduced a new technical Lemma \ref{lem:LB2}. This allowed us not only to prove our results in a systematic way, but also to re-establish some results from \cite{Ceg23} concerning weak regularity  and weak convergence.

The organization of  our  paper is as follows: in Section \ref{sec:preliminaries} we recall some properties of relaxed cutters which we apply in the subsequent sections. In particular, we connect them to demicontractions and we elaborate on their products. Moreover, we recall the notions of (weakly/linearly) regular operators,  where we give some basic examples.  The two above-mentioned main results, including Lemma \ref{lem:LB2}, are presented in Sections \ref{sec:main} and \ref{sec:application}.

\section{Preliminaries\label{sec:preliminaries}}
In this section we introduce notations and recall some well-known facts which we use in this paper.

In the whole paper $\mathcal{H}$ denotes a real Hilbert space with inner
product $\langle \cdot ,\cdot \rangle $ and induced norm $\Vert \cdot \Vert $.  We denote the weak convergence of a sequence $\{x^k\}_{k=0}^\infty$ to a point $x \in \mathcal H$ by $x^k \rightharpoonup x$ as $k \to \infty$. The distance of a point $x \in \mathcal H$ to a nonempty subset $A \subseteq\mathcal H$ is given by $d(x, A) := \inf_{a \in A} \|x - a\|$.

Let $T:\mathcal{H}\rightarrow \mathcal{H}$. An operator $T_{\lambda }:=\operatorname{Id}+\lambda (T-\operatorname{Id})$ is called a $\lambda$\textit{-relaxation} of $T$, where $\lambda \geq 0$.  Note that $T_\lambda - \operatorname{Id} = \lambda (T - \operatorname{Id})$ and $(T_{\lambda })_{\mu}=T_{\lambda \mu }$, where $\lambda ,\mu \geq 0$. In particular,  $T = (T_{\frac 1 \lambda})_\lambda$ and  $T-\operatorname{Id}=\lambda (T_{\frac 1 \lambda}-\operatorname{Id})$, where $\lambda >0$.

The subset $\operatorname{Fix}T:=\{x\in \mathcal{H}:T(x)=x\}$ is called the \textit{fixed point set} of $T$ and  each one if  its elements is called a \textit{fixed point}. Clearly, $\operatorname{Fix} T_\lambda = \operatorname{Fix}T$ for $\lambda > 0$.

\subsection[Relaxed Cutters]{\label{sec:RelCutters}Relaxed Cutters}

\begin{definition}
\label{def:cutter}%
%TCIMACRO{\TeXButton{rm}{\rm\ }}%
%BeginExpansion
\rm\ %
%EndExpansion
An operator $T \colon \mathcal{H}\to \mathcal{H}$ with $\operatorname{Fix}T\neq
\emptyset $ is called a \textit{cutter} if for all $x\in \mathcal{H}$ and for all $z\in \operatorname{Fix}T$, we have
\begin{equation}
\langle z-T(x),x-T(x)\rangle \leq 0.
\end{equation}%
A $\lambda $-relaxation of a cutter, where $\lambda \geq 0$, is called a $\lambda $-\textit{relaxed cutter}.
\end{definition}

In the literature a cutter is also called a $\mathcal{T}$-class operator
\cite{BC01}, a firmly quasi-nonexpansive operator \cite{YO04, BC17} or a directed
operator \cite{CS09}. We use the  term  ``cutter'' following \cite{CC10, Ceg12}.

We relate relaxed cutters to quasi-nonexpansive operators and to demicontractions. First we recall the two aforementioned definitions.
\begin{definition}
\label{d-QNE}%
%TCIMACRO{\TeXButton{rm}{\rm\ }}%
%BeginExpansion
\rm\ %
%EndExpansion
We say that an operator $T \colon \mathcal{H}\to \mathcal{H}$ with $\operatorname{Fix}T\neq
\emptyset$ is
\begin{enumerate}
  \item[(a)]  \textit{quasi-nonexpansive }(QNE), if for all $x\in \mathcal{H}$ and all $z\in \operatorname{Fix}T$, we have
      \begin{equation}
        \Vert T(x)-z\Vert\leq \Vert x-z\Vert;
      \end{equation}
  \item[(b)]  $\rho $\textit{-demicontractive} ($\rho $-DC), where $\rho \in ( -\infty ,1)$, if for all $x\in \mathcal{H}$ and all $z\in \operatorname{Fix}T$, we have
      \begin{equation}\label{d-DC:eq}
        \Vert T(x)-z\Vert ^{2}\leq \Vert x-z\Vert ^{2}+\rho \Vert T(x)-x\Vert ^{2}.
      \end{equation}
\end{enumerate}
\end{definition}

\begin{proposition} \label{prop:demicontractive}
  Let $T \colon \mathcal{H}\to \mathcal{H}$ and assume that $\operatorname{Fix}T\neq \emptyset$. The following conditions are equivalent:
  \begin{enumerate}
    \item[(i)] $T$ is  a  $\lambda$-relaxation of some cutter, where $\lambda >0$;
    \item[(ii)] $T$ is  a  $\theta$-relaxation of some quasi-nonexpansive operator, where $\theta = \frac{\lambda}{2} > 0$;
    \item[(iii)] $T$ is $\rho$-demicontractive, where $\rho = \frac{\lambda - 2}{\lambda} < 1$.
  \end{enumerate}
\end{proposition}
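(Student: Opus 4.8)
The plan is to funnel all three statements into a single inner-product inequality and then read off the equivalences from the prescribed values of $\theta$ and $\rho$. The workhorse is the elementary identity, valid for \emph{any} operator $T$ and \emph{any} $z\in\operatorname{Fix}T$,
\begin{equation*}
\|T(x)-z\|^{2}=\|x-z\|^{2}-2\langle T(x)-x,\,z-x\rangle+\|T(x)-x\|^{2},\tag{P}
\end{equation*}
which follows at once by writing $T(x)-z=(x-z)-(x-T(x))$ and expanding the square. Alongside it I introduce, for a parameter $c>0$, the condition
\begin{equation*}
\langle T(x)-x,\,z-x\rangle\ \ge\ c\,\|T(x)-x\|^{2}\qquad\text{for all }x\in\mathcal H,\ z\in\operatorname{Fix}T.\tag{$\ast_c$}
\end{equation*}
I will show that each of (i), (ii), (iii) is equivalent to $(\ast_c)$ with $c=1/\lambda$, whence they are equivalent to one another.

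For (iii), substituting (P) into the demicontractivity inequality $\|T(x)-z\|^{2}\le\|x-z\|^{2}+\rho\|T(x)-x\|^{2}$ cancels the term $\|x-z\|^{2}$ and, after rearranging, yields exactly $(\ast_c)$ with $c=(1-\rho)/2$; since $\rho=(\lambda-2)/\lambda$ gives $1-\rho=2/\lambda$, this constant is $1/\lambda$. For (i), I first rewrite the cutter inequality $\langle z-S(x),x-S(x)\rangle\le 0$ in the equivalent form $\langle S(x)-x,\,z-x\rangle\ge\|S(x)-x\|^{2}$, i.e. $(\ast_c)$ with $c=1$ for the cutter $S$ itself; for $T=S_{\lambda}$ one has $T(x)-x=\lambda(S(x)-x)$ and $\operatorname{Fix}S=\operatorname{Fix}T$, so substituting $S(x)-x=\lambda^{-1}(T(x)-x)$ and clearing the factor $\lambda$ turns this into $(\ast_c)$ with $c=1/\lambda$. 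For (ii), feeding (P) into the quasi-nonexpansivity inequality $\|Q(x)-z\|^{2}\le\|x-z\|^{2}$ gives $(\ast_c)$ with $c=1/2$ for $Q$; for $T=Q_{\theta}$ the same scaling $T(x)-x=\theta(Q(x)-x)$ produces $c=1/(2\theta)$, which equals $1/\lambda$ precisely because $\theta=\lambda/2$.

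What remains are the converse directions of (i) and (ii): starting from $(\ast_c)$ with $c=1/\lambda$ I must \emph{exhibit} an underlying cutter, respectively quasi-nonexpansive operator, not merely verify an inequality. The natural — and essentially forced — choice is the de-relaxation $S:=T_{1/\lambda}$ (resp.\ $Q:=T_{2/\lambda}$). Here the preliminaries do the bookkeeping: $(T_{1/\lambda})_{\lambda}=T_{1}=T$ and $\operatorname{Fix}T_{1/\lambda}=\operatorname{Fix}T\ne\emptyset$, and reversing the scaling computation above shows that $S$ satisfies the cutter inequality (resp.\ that $Q$ is quasi-nonexpansive). This is the only point that needs genuine care; everything else is arithmetic. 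I would also record at the outset the admissibility statements $\theta=\lambda/2>0$ and $\rho=1-2/\lambda<1$, which hold for every $\lambda>0$ and make the three prescribed constants legitimate.
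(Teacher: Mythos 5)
Your proof is correct and complete. Note that the paper does not actually prove Proposition \ref{prop:demicontractive}: it simply cites \cite[Remark 3.1]{Mai08}, \cite[Theorem 2.5]{Ceg23} and \cite[Theorem 2.1.39]{Ceg12}, so any self-contained argument is already more than the paper supplies. That said, your route is exactly the one the paper's machinery implicitly endorses: your hub condition $(\ast_c)$ with $c=1/\lambda$ is, up to multiplying through by $\lambda$, precisely the characterization \eqref{prop:condA:eq} of Proposition \ref{prop:condA} (M\u{a}ru\c{s}ter's condition (A)), and the paper proves that characterization by the same de-relaxation device $U:=T_{1/\lambda}$ together with the identity $\lambda\langle z-x,T(x)-x\rangle-\|T(x)-x\|^2=-\lambda^2\langle z-U(x),x-U(x)\rangle$, which is your expansion (P) in disguise. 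Your treatment of the converse directions is the right one to flag as the only delicate point: since ``is a $\lambda$-relaxation of a cutter'' is an existential statement, one must exhibit the witness $S=T_{1/\lambda}$ (resp.\ $Q=T_{2/\lambda}$), check $\operatorname{Fix}S=\operatorname{Fix}T\neq\emptyset$ so that the cutter definition even applies, and reverse the scaling; you do all three. The arithmetic $c=(1-\rho)/2=1/\lambda$ and $c=1/(2\theta)=1/\lambda$ checks out, so the three statements are indeed pairwise equivalent via $(\ast_{1/\lambda})$.
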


\begin{proof}
    See \cite[Remark 3.1]{Mai08} or \cite[Theorem 2.5]{Ceg23}. See also  \cite[Theorem 2.1.39]{Ceg12} for $\lambda \in (0,2]$.
\end{proof}

\begin{remark}
%TCIMACRO{\TeXButton{rm}{\rm\ }}%
%BeginExpansion
\rm\ %
%EndExpansion
  The notion of  a  demicontraction in a Hilbert space $\mathcal{H}$ was introduced by Hicks and Kubicek in 1977 \cite{HK77}. M\u{a}ru\c{s}ter \cite{Mar77} introduced demicontractions using  a  different property and called it condition (A); see \eqref{prop:condA:eq} below. Moore \cite{Moo98} proved that these two notions, \eqref{d-DC:eq} and \eqref{prop:condA:eq}, are equivalent.
\end{remark}

\begin{remark}
%TCIMACRO{\TeXButton{rm}{\rm\ }}%
%BeginExpansion
\rm\ %
%EndExpansion
  If $T$ is $\rho$-demicontractive for some $\rho \in (-\infty, 0)$, then this operator is also called $(-\rho)$-\emph{strongly quasi-nonexpansive}; see \cite{Ceg12}. In view of Proposition \ref{prop:demicontractive}, strongly quasi-nonexpansive operators are $\lambda$-relaxations of cutters, where $\lambda \in (0,2)$. Equivalently, strongly quasi-nonexpansive operators are $\theta$-relaxations of quasi-nonexpansive operators, where $\theta \in (0,1)$.
\end{remark}

 We provide a few examples of cutters and relaxed cutters. We begin with three very broad classes of operators which are well established in the optimization area.

\begin{example}[Averaged Operators] \rm
  Recall that $T \colon \mathcal H \to \mathcal H$ is \emph{averaged} if it is a $\theta$-relaxation of a nonexpansive operator for some $\theta \in (0, 1)$, that is, $T = (1-\theta) I + \theta N$, where $\|N(x) - N(y)\| \leq \|x - y\|$ for all $x,y \in \mathcal H$. It is easy to see that if $\operatorname{Fix} T \neq \emptyset$, then $N$ is quasi-nonexpansive and $T$ is a $\lambda$-relaxed cutter, where $\lambda = 2\theta$.
\end{example}

\begin{example}[Firmly Nonexpansive Operators] \rm
  Recall that $T \colon \mathcal H \to \mathcal H$ is \emph{firmly nonexpansive} if $\langle T(x) - T(y), x - y \rangle \geq \|T(x) - T(y)\|^2$ for all $x,y \in \mathcal H$. It can be shown that if $T$ is firmly nonexpasive, then $T$ is $\frac 1 2$-averaged; see, for example, \cite[Theorem 2.2.10]{Ceg12}. Thus, if $\operatorname{Fix} T \neq \emptyset$, then $T$ is a cutter.
\end{example}

\begin{example}[Strict Contraction] \label{ex:contraction} \rm
  Let $T \colon \mathcal H \to \mathcal H$ and assume that $T$ is an $\alpha$-strict contraction for some $\alpha \in (0,1)$, that is, $\|T(x) - T(y)\| \leq \alpha \|x - y\|$ for all $x,y \in \mathcal H$. Then $T$ is $\theta$-averaged, where $\theta = \frac{\alpha+1}{2}$; see, for example, \cite[Theorem 2.2.34]{Ceg12} or \cite[Proposition 4.38]{BC17}. In particular, $T$ is a $\lambda$-relaxed cutter, where $\lambda = \alpha+1$.
\end{example}

We now proceed to specific examples.

\begin{example}[Metric Projection] \rm
  Let $C\subseteq \mathcal H$ be nonempty, closed and convex. The \emph{metric projection} $P_C(x):=\operatorname{argmin}_{z\in C}\|z-x\|$ is firmly nonexpansive and  $\operatorname{Fix} P_C = C$; see, for example, \cite[Theorem 2.2.21]{Ceg12}. Thus any metric projection is a cutter.
\end{example}

\begin{example}[Subgradient Projection]\label{ex:subProj} \rm
  Let $f\colon \mathcal{H}\to \mathbb{R}$ be a lower semicontinuous and convex function with nonempty sublevel set $S(f,0):=\{x\in \mathcal{H}\colon f(x)\leq 0\}\neq \emptyset $. For each $x\in \mathcal{H}$, let  $g(x)$  be a chosen subgradient from the subdifferential set $\partial f(x):=\{g\in \mathcal{H}\colon f(y)\geq f(x)+\langle g,y-x\rangle \text{ for all }y\in \mathcal{H}\}$,  which, by \cite[Proposition 16.27]{BC17}, is nonempty.  Note here that we apply \cite[Proposition 16.27]{BC17} to the real-valued functional $f$.  The  \emph{subgradient projection}
  \begin{equation}\label{ex:subProj:eq}
    P_{f}(x):=
    \begin{cases}
      x-\frac{f(x)}{\|g(x)\|^2}g(x), & \mbox{if } f(x)>0 \\
      x, & \mbox{otherwise,}
    \end{cases}
  \end{equation}
  is a cutter  and $\operatorname{Fix} P_{f}=S(f,0)$; see, for example, \cite[Corollary 4.2.6]{Ceg12}.
\end{example}

\begin{example}[Proximal Operator] \label{ex:proximalOp} \rm
  Let $f\colon\mathcal H \to \mathbb R \cup\{+\infty\}$ be a proper lower semicontinuous and convex function. The \emph{proximal operator}  $\operatorname{prox}\nolimits_f (x) := \operatorname{argmin}_{y\in \mathcal H} ( f(y)+\frac 1 2 \|y-x\|^2)$  is firmly nonexpansive and $\operatorname{Fix} (\operatorname{prox}_f)=\operatorname{Argmin}_{x\in\mathcal H} f(x)$; see \cite[Propositions 12.28 and 12.29]{BC17}. Thus if $f$ has at least one minimizer, then $\operatorname{prox}_f$ is a cutter; see \cite[Theorem 2.2.5]{Ceg12}.
\end{example}

\subsection{Product of Two Relaxed Cutters}

Below is a very simple but useful characterization of relaxed cutters; see \cite[Remark 2.1.31]{Ceg12},  where the case $\lambda \in (0,2]$ is considered.
\begin{proposition}\label{prop:condA}
  Let $T \colon \mathcal H \to \mathcal H$ be an operator with $\operatorname{Fix}T \neq \emptyset$ and let $\lambda > 0$. Then $T$ is  a  $\lambda$-relaxed cutter if and only if for all $x\in \mathcal{H}$ and all $z\in \operatorname{Fix}T$, we have
    \begin{equation}\label{prop:condA:eq}
        \lambda \langle z-x,T(x)-x\rangle \geq \Vert T(x)-x\Vert ^{2}.
    \end{equation}%
\end{proposition}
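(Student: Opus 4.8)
The plan is to reduce everything to the defining inequality of a plain cutter via the relaxation bookkeeping recorded at the beginning of Section \ref{sec:preliminaries}. Suppose first that $T$ is a $\lambda$-relaxed cutter, so that $T = S_\lambda$ for some cutter $S$. Since $\lambda > 0$, the identities $T - \operatorname{Id} = \lambda(S - \operatorname{Id})$ and $S = T_{1/\lambda}$ hold, and $\operatorname{Fix} T = \operatorname{Fix} S$; in particular, quantifying over $z \in \operatorname{Fix} T$ is the same as quantifying over $z \in \operatorname{Fix} S$. The crucial relation to carry throughout is
$$S(x) - x = \tfrac{1}{\lambda}\bigl(T(x) - x\bigr),$$
which lets me rewrite every occurrence of $S(x)$ in the cutter inequality for $S$ in terms of $T(x)$.

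Next I would substitute $z - S(x) = (z - x) - \tfrac{1}{\lambda}(T(x)-x)$ and $x - S(x) = -\tfrac{1}{\lambda}(T(x)-x)$ into the cutter condition $\langle z - S(x),\, x - S(x)\rangle \le 0$. Expanding the inner product produces
$$-\tfrac{1}{\lambda}\langle z - x,\, T(x) - x\rangle + \tfrac{1}{\lambda^2}\Vert T(x)-x\Vert^2 \le 0,$$
and multiplying through by $\lambda^2 > 0$ yields exactly $\lambda \langle z - x,\, T(x)-x\rangle \ge \Vert T(x)-x\Vert^2$, which is \eqref{prop:condA:eq}. For the converse I would simply read the same chain backwards: given an operator $T$ satisfying \eqref{prop:condA:eq}, set $S := T_{1/\lambda}$, note $\operatorname{Fix} S = \operatorname{Fix} T$, and run the algebra in reverse (dividing by $\lambda^2$ and regrouping) to recover $\langle z - S(x),\, x - S(x)\rangle \le 0$ for all $x \in \mathcal H$ and all $z \in \operatorname{Fix} S$, so that $S$ is a cutter and $T = S_\lambda$ is its $\lambda$-relaxation. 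Because each algebraic step is an equivalence — exact substitution of the relation between $S$ and $T$, and multiplication by the positive constant $\lambda^2$ — both implications are obtained simultaneously, and one may even present the argument as a single string of equivalences rather than as two separate directions.

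There is no genuine obstacle here beyond bookkeeping: the only points requiring care are to keep the factors of $\lambda$ and $1/\lambda$ straight and to record explicitly that $\operatorname{Fix} S = \operatorname{Fix} T$, so that the universally quantified statement over fixed points transfers faithfully between the two formulations. The hypothesis $\lambda > 0$ is used twice — once to guarantee that $S = T_{1/\lambda}$ is well defined with the same fixed point set, and once to preserve the direction of the inequality when clearing the denominator $\lambda^2$.
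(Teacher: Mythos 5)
Your argument is correct and is essentially the paper's own proof: both set $S = T_{1/\lambda}$, use $S(x)-x=\tfrac{1}{\lambda}(T(x)-x)$ with $\operatorname{Fix}S=\operatorname{Fix}T$, and reduce the claim to the single algebraic identity $\lambda\langle z-x,T(x)-x\rangle-\Vert T(x)-x\Vert^2=-\lambda^2\langle z-S(x),x-S(x)\rangle$, from which both implications follow at once. No issues.
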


\begin{proof}
  Put $U := T_{\frac 1 \lambda}$ so that $T = U_{\lambda}$ and $\operatorname{Fix} T = \operatorname{Fix} U$. Moreover, let $x \in \mathcal H$ and $z \in \operatorname{Fix} T$. Then,
  \begin{equation}\label{pr:prop:condA:eq}
    \lambda \langle z-x, T(x)-x\rangle - \|T(x)-x\|^2 = - \lambda^2 \langle z-U(x), x - U(x)\rangle.
  \end{equation}
  If $T$ is  a  $\lambda$-relaxed cutter, then $U = T_{\frac 1 \lambda}$ is a cutter. Thus the right-hand side of \eqref{pr:prop:condA:eq} is  non-negative  and we arrive at \eqref{prop:condA:eq}. On the other hand, when \eqref{prop:condA:eq} holds, then the left-hand side of \eqref{pr:prop:condA:eq} is  non-negative,  which shows that $U$ is a cutter.
\end{proof}

\bigskip
 We now  recall a technical lemma from \cite[Lemma 3.5]{Ceg23}.

\begin{lemma} \label{lem:ni}
  Let $\lambda, \mu >0$ and assume that $\lambda \mu < 4$. Then
  \begin{equation}\label{lem:ni:eq1}
    \nu :=\frac{4(\lambda +\mu -\lambda \mu )}{4-\lambda \mu }\
    \begin{cases}
      < 2, & \text{if } \max\{\lambda, \mu\} < 2\\
      = 2, & \text{if } \max\{\lambda, \mu\} = 2\\
      > 2, & \text{if } \max\{\lambda, \mu\} > 2.
    \end{cases}
    %\label{e-ni}
  \end{equation}
  Moreover,
  \begin{equation}\label{lem:ni:max}
    \nu \geq \max\{\lambda, \mu\}
  \end{equation}
  and $\nu$ satisfies
  \begin{equation}\label{lem:ni:eq2}
    4 \left(\frac 1 \lambda - \frac 1 \nu \right) \left(\frac 1 \mu - \frac 1 \nu \right)
    = \left( 1 - \frac 2 \nu\right)^2.
  \end{equation}
\end{lemma}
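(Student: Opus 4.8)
The plan is to reduce all three assertions to a handful of elementary factorizations, exploiting the fact that the hypothesis $\lambda\mu < 4$ makes the common denominator $4 - \lambda\mu$ strictly positive. Everything then follows by reading off signs.

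First I would treat the trichotomy \eqref{lem:ni:eq1}. Writing $\nu$ over the common denominator, a direct computation gives
\[
\nu - 2 = \frac{-2(\lambda-2)(\mu-2)}{4-\lambda\mu}.
\]
Since $4 - \lambda\mu > 0$, the sign of $\nu - 2$ is exactly the sign of $-(\lambda-2)(\mu-2)$. The key observation, and the one genuine use of the hypothesis $\lambda\mu < 4$, is that this product is governed by $\max\{\lambda,\mu\}$: if $\max\{\lambda,\mu\} > 2$, say $\lambda > 2$, then $\mu < 4/\lambda < 2$, so the two factors have opposite signs and $(\lambda-2)(\mu-2) < 0$; if $\max\{\lambda,\mu\} = 2$, then one factor vanishes; and if $\max\{\lambda,\mu\} < 2$, then both factors are negative, so $(\lambda-2)(\mu-2) > 0$. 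Note that both factors being positive is excluded precisely because it would force $\lambda\mu > 4$. Combining these three cases with the displayed expression for $\nu - 2$ yields \eqref{lem:ni:eq1}.

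Next, for the estimate \eqref{lem:ni:max}, by symmetry it suffices to show $\nu \geq \lambda$ and $\nu \geq \mu$ separately. The same kind of computation produces the perfect-square factorization
\[
\nu - \lambda = \frac{\mu(\lambda-2)^2}{4-\lambda\mu},
\]
and, symmetrically, $\nu - \mu = \lambda(\mu-2)^2/(4-\lambda\mu)$. As $\lambda,\mu > 0$ and $4 - \lambda\mu > 0$, both right-hand sides are nonnegative, which gives $\nu \geq \max\{\lambda,\mu\}$. Finally, for the identity \eqref{lem:ni:eq2}, I would clear denominators: writing $\frac 1\lambda - \frac 1\nu = (\nu-\lambda)/(\lambda\nu)$, likewise for $\mu$, and $1 - \frac 2\nu = (\nu-2)/\nu$, the claim becomes the polynomial identity
\[
4(\nu-\lambda)(\nu-\mu) = \lambda\mu(\nu-2)^2.
\]
Here the three factorizations already obtained do all the work: substituting the expressions for $\nu-\lambda$, $\nu-\mu$ and $\nu-2$, both sides collapse to $4\lambda\mu(\lambda-2)^2(\mu-2)^2/(4-\lambda\mu)^2$, so the identity holds. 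Since each step is a short algebraic manipulation, there is no serious analytic obstacle; the only point requiring a little care is the case analysis in the trichotomy, where one must invoke $\lambda\mu < 4$ to rule out $\lambda$ and $\mu$ both exceeding $2$.
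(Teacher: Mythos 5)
Your proof is correct and complete. Note that the paper itself offers no argument for this lemma --- it simply recalls the statement from \cite[Lemma 3.5]{Ceg23} --- so there is no internal proof to compare against; your write-up supplies a self-contained verification. The three factorizations you use, namely
\[
\nu - 2 = \frac{-2(\lambda-2)(\mu-2)}{4-\lambda\mu}, \qquad
\nu - \lambda = \frac{\mu(\lambda-2)^2}{4-\lambda\mu}, \qquad
\nu - \mu = \frac{\lambda(\mu-2)^2}{4-\lambda\mu},
\]
all check out, and they do carry the entire lemma: the trichotomy follows from the sign of $(\lambda-2)(\mu-2)$ together with your correct observation that $\lambda\mu<4$ forbids both factors from being positive; the bound $\nu\geq\max\{\lambda,\mu\}$ is immediate from the perfect squares; and the identity \eqref{lem:ni:eq2} reduces to $4(\nu-\lambda)(\nu-\mu)=\lambda\mu(\nu-2)^2$, which both sides satisfy. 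The only step worth making explicit is that clearing denominators in \eqref{lem:ni:eq2} requires $\nu\neq 0$; this is harmless here since \eqref{lem:ni:max}, which you prove first, already gives $\nu\geq\max\{\lambda,\mu\}>0$.
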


The following lemma can be deduced from the proof of \cite[Theorem 3.8]{Ceg23}. Since we  rely  heavily on this result in the sequel, we sketch its proof for completeness.

\begin{lemma} \label{lem:LB1}
  Let $T \colon \mathcal H \to \mathcal H$ and $U \colon \mathcal H \to \mathcal H$ be $\lambda$- and $\mu$-relaxed cutters, respectively, where $\lambda, \mu >0$. Assume that $\lambda \mu < 4$ and $\operatorname{Fix}T\cap \operatorname{Fix}U\neq \emptyset$. Moreover, let $\nu$ be defined by \eqref{lem:ni:eq1}. Then, for each $x \in \mathcal H$ and $z \in \operatorname{Fix}T \cap \operatorname{Fix}U$, we have
  \begin{equation} \label{lem:LB1:ineq}
    \langle z-x, UT(x)-x\rangle
    \geq \left\| \sqrt{\frac{1}{\lambda }-\frac{1}{\nu }}\Big(T(x)-x\Big)
    \pm \sqrt{\frac{1}{\mu }-\frac{1}{\nu }} \Big(UT(x)-T(x)\Big)\right\|^{2}\geq 0\text{,}
  \end{equation}
  where the sign ``$\pm $'' should be replaced by ``$+$'' when $\max \{\lambda, \mu\} \geq 2$ and by ``$-$'' when $\max \{\lambda, \mu\} < 2$.
\end{lemma}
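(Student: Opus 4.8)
The plan is to reduce everything to the two ``condition (A)'' inequalities furnished by Proposition \ref{prop:condA}, and then to match the resulting quadratic form against the squared norm on the right-hand side of \eqref{lem:LB1:ineq} using the two identities in Lemma \ref{lem:ni}. First I would abbreviate $a := T(x)-x$ and $b := UT(x)-T(x)$, so that $UT(x)-x = a+b$. Applying Proposition \ref{prop:condA} to the $\lambda$-relaxed cutter $T$ at the point $x$ (with $z\in \operatorname{Fix}T$) gives $\langle z-x, a\rangle \geq \frac{1}{\lambda}\|a\|^2$, while applying it to the $\mu$-relaxed cutter $U$ at the point $T(x)$ (with $z\in\operatorname{Fix}U$) gives $\langle z-T(x), b\rangle \geq \frac{1}{\mu}\|b\|^2$. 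Since $z-T(x) = (z-x)-a$, the second inequality rewrites as $\langle z-x, b\rangle \geq \frac{1}{\mu}\|b\|^2 + \langle a,b\rangle$. Adding the two and regrouping yields the lower bound
\[
\langle z-x, UT(x)-x\rangle = \langle z-x,a\rangle + \langle z-x,b\rangle \geq \frac{1}{\lambda}\|a\|^2 + \frac{1}{\mu}\|b\|^2 + \langle a,b\rangle .
\]

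Next I would expand the squared norm appearing on the right-hand side of \eqref{lem:LB1:ineq}. Writing $p := \frac{1}{\lambda}-\frac{1}{\nu}$ and $q := \frac{1}{\mu}-\frac{1}{\nu}$, which are both nonnegative by \eqref{lem:ni:max} (so the square roots are well defined), that norm equals $p\|a\|^2 \pm 2\sqrt{pq}\,\langle a,b\rangle + q\|b\|^2$. Subtracting it from the lower bound just obtained, the coefficients of $\|a\|^2$ and $\|b\|^2$ each collapse to $\frac{1}{\nu}$, leaving the residual $\frac{1}{\nu}\|a\|^2 + \frac{1}{\nu}\|b\|^2 + \bigl(1 \mp 2\sqrt{pq}\bigr)\langle a,b\rangle$.

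The one point requiring care, and the place where the sign convention in the statement is forced, is the evaluation of the cross-term coefficient. Identity \eqref{lem:ni:eq2} gives $2\sqrt{pq} = \bigl|1-\frac{2}{\nu}\bigr|$, and \eqref{lem:ni:eq1} fixes the sign of $1-\frac{2}{\nu}$: when $\max\{\lambda,\mu\}\geq 2$ we have $\nu\geq 2$, so $2\sqrt{pq} = 1-\frac{2}{\nu}$ and the ``$+$'' branch makes $1-2\sqrt{pq} = \frac{2}{\nu}$; when $\max\{\lambda,\mu\}<2$ we have $\nu<2$, so $2\sqrt{pq} = \frac{2}{\nu}-1$ and the ``$-$'' branch makes $1+2\sqrt{pq} = \frac{2}{\nu}$. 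In either case the residual equals $\frac{1}{\nu}\bigl(\|a\|^2 + 2\langle a,b\rangle + \|b\|^2\bigr) = \frac{1}{\nu}\|a+b\|^2 = \frac{1}{\nu}\|UT(x)-x\|^2$, which is nonnegative since $\nu\geq\max\{\lambda,\mu\}>0$. This establishes the first inequality in \eqref{lem:LB1:ineq}, and the second is immediate because the right-hand side is a squared norm. The main hurdle is purely bookkeeping: tying the absolute value coming from \eqref{lem:ni:eq2} to the correct branch of \eqref{lem:ni:eq1} so that the $\pm$ convention always produces the clean surplus $\frac{1}{\nu}\|UT(x)-x\|^2$.
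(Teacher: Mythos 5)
Your proposal is correct and follows essentially the same route as the paper: the same decomposition $a=T(x)-x$, $b=UT(x)-T(x)$, the same two applications of Proposition \ref{prop:condA}, and the same use of \eqref{lem:ni:eq1}--\eqref{lem:ni:eq2} to resolve the cross term, with both arguments in fact yielding the slightly stronger bound $\langle z-x, UT(x)-x\rangle - \frac{1}{\nu}\|UT(x)-x\|^{2}\geq \|\cdot\|^{2}$. The only difference is cosmetic bookkeeping: the paper completes the square directly, while you subtract the expanded square and identify the surplus as $\frac{1}{\nu}\|UT(x)-x\|^{2}$.
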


\begin{proof}
  In order to shorten the notation, put $a:=T(x)-x$, $b:=UT(x)-T(x)$ and $c:=UT(x)-x$. Note that $a+b = c$. Using Proposition \ref{prop:condA}, we have $\langle z-x,a\rangle \geq \frac 1 \lambda \|a\|^2$ and $\langle z-T(x),b\rangle \geq \mu \|b\|^2$. Consequently,
  \begin{align}\label{pr:lem:LB1:ineq1}
    \langle z-x,c\rangle -\frac{1}{\nu }\|c\|^2
    & \textstyle = \langle z-x,a\rangle + \langle z-T(x),b\rangle + \langle a, b\rangle -\frac{1}{\nu }\|a+b\|^2 \nonumber \\
    & \textstyle \geq \frac 1 \lambda \|a\|^2 + \frac 1 \mu \|b\|^2 + \langle a, b\rangle -\frac{1}{\nu }\|a+b\|^2 \nonumber \\
    & \textstyle = \left(\frac 1 \lambda - \frac 1 \nu \right) \|a\|^2
      + \left(\frac 1 \mu - \frac 1 \nu \right) \|b\|^2
      + \left( 1 - \frac 2 \nu\right) \langle a, b\rangle.
  \end{align}
  Assume now that $\max\{\lambda, \mu\} \geq 2$. Then,  using  \eqref{lem:ni:eq1}--\eqref{lem:ni:eq2}, we get $1 - \frac 2 \nu \geq 0$ and
  \begin{equation}\label{} \textstyle
    2 \sqrt{\frac 1 \lambda - \frac 1 \nu} \cdot \sqrt{\frac 1 \mu - \frac 1 \nu}
    =  1 - \frac 2 \nu.
  \end{equation}
  Consequently,
  \begin{equation}\label{pr:lem:LB1:ineq2}
    \textstyle \left(\frac 1 \lambda - \frac 1 \nu \right) \|a\|^2
      + \left(\frac 1 \mu - \frac 1 \nu \right) \|b\|^2
      + \left( 1 - \frac 2 \nu\right) \langle a, b\rangle
      = \left\| \sqrt{\frac{1}{\lambda }-\frac{1}{\nu }}a
      + \sqrt{\frac{1}{\mu }-\frac{1}{\nu }} b\right\|^2.
  \end{equation}
  Assume  next  that $\max\{\lambda, \mu\} < 2$. Then,  by again using  \eqref{lem:ni:eq1}--\eqref{lem:ni:eq2}, we get $1 - \frac 2 \nu < 0$ and
  \begin{equation}\label{} \textstyle
    2 \sqrt{\frac 1 \lambda - \frac 1 \nu} \cdot \sqrt{\frac 1 \mu - \frac 1 \nu}
    =  - \left(1 - \frac 2 \nu \right).
  \end{equation}
  Consequently, in this case we get
  \begin{equation}\label{pr:lem:LB1:ineq3}
    \textstyle \left(\frac 1 \lambda - \frac 1 \nu \right) \|a\|^2
      + \left(\frac 1 \mu - \frac 1 \nu \right) \|b\|^2
      + \left( 1 - \frac 2 \nu\right) \langle a, b\rangle
      = \left\| \sqrt{\frac{1}{\lambda }-\frac{1}{\nu }}a
      - \sqrt{\frac{1}{\mu }-\frac{1}{\nu }} b\right\|^2.
  \end{equation}
  By combining \eqref{pr:lem:LB1:ineq1}, \eqref{pr:lem:LB1:ineq2} and \eqref{pr:lem:LB1:ineq3}, we arrive at
  \begin{equation} \label{pr:lem:LB1:ineq}
    \textstyle \langle z-x,c\rangle -\frac{1}{\nu }\|c\|^{2}
    \geq \left\| \sqrt{\frac{1}{\lambda }-\frac{1}{\nu }}a
    \pm \sqrt{\frac{1}{\mu }-\frac{1}{\nu }} b\right\|^{2}\geq 0,
  \end{equation}
  where the sign ``$\pm $'' should be replaced by ``$+$'' when $\max \{\lambda, \mu\} \geq 2$ and by ``$-$'' when $\max \{\lambda, \mu\} < 2$. This completes the proof.
\end{proof}

\bigskip
An extended form of the following theorem was proved in
\cite[Theorem 3.8]{Ceg23}.

\begin{theorem}
\label{t-comp}
 Let $T \colon \mathcal H \to \mathcal H$ and $U \colon \mathcal H \to \mathcal H$ be $\lambda$- and $\mu$-relaxed cutters, respectively, where $\lambda, \mu >0$. Assume that $\lambda \mu < 4$ and $\operatorname{Fix}T\cap \operatorname{Fix}U\neq \emptyset$. Then the product $UT$ is  a  $\nu$-relaxed cutter with $\operatorname{Fix} UT = \operatorname{Fix}T\cap \operatorname{Fix}U$, where $\nu$ is defined by \eqref{lem:ni:eq1}. Consequently,  $(UT)_{1/\nu }$ is a cutter.
\end{theorem}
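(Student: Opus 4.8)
The plan is to establish the two claims in turn: first the fixed point identity $\operatorname{Fix} UT = \operatorname{Fix}T\cap\operatorname{Fix}U$, and only then the $\nu$-relaxed cutter property, since verifying the latter through Proposition \ref{prop:condA} presupposes that the fixed point set is already known.

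For the fixed point identity, the inclusion $\operatorname{Fix}T\cap\operatorname{Fix}U\subseteq\operatorname{Fix}UT$ is immediate, because $z\in\operatorname{Fix}T\cap\operatorname{Fix}U$ gives $UT(z)=U(z)=z$; in particular $\operatorname{Fix}UT\neq\emptyset$. The reverse inclusion is where I would spend the effort. Take $x\in\operatorname{Fix}UT$, fix any $z\in\operatorname{Fix}T\cap\operatorname{Fix}U$, and set $a:=T(x)-x$. Applying the characterization \eqref{prop:condA:eq} to the $\lambda$-relaxed cutter $T$ at $x$ gives $\langle z-x,a\rangle\geq\frac{1}{\lambda}\|a\|^2$. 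Applying \eqref{prop:condA:eq} to the $\mu$-relaxed cutter $U$ at the point $T(x)$, and using $U(T(x))=UT(x)=x$ so that $U(T(x))-T(x)=-a$ and $z-T(x)=(z-x)-a$, yields after expansion $\langle z-x,a\rangle\leq(1-\frac{1}{\mu})\|a\|^2$. Chaining the two bounds produces $(\frac{1}{\lambda}+\frac{1}{\mu}-1)\|a\|^2\leq 0$. Because $\frac{1}{\lambda}+\frac{1}{\mu}-1=\frac{\lambda+\mu-\lambda\mu}{\lambda\mu}$ and $\lambda+\mu-\lambda\mu>0$ whenever $\lambda\mu<4$ (this positivity is exactly what makes $\nu>0$ in \eqref{lem:ni:eq1}, and follows from $\lambda+\mu\geq 2\sqrt{\lambda\mu}>\lambda\mu$), the coefficient is strictly positive, forcing $a=0$. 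Hence $T(x)=x$, and then $U(x)=U(T(x))=x$, so $x\in\operatorname{Fix}T\cap\operatorname{Fix}U$ and the two sets coincide.

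The cutter property is then essentially read off from Lemma \ref{lem:LB1}. For arbitrary $x\in\mathcal H$ and $z\in\operatorname{Fix}UT=\operatorname{Fix}T\cap\operatorname{Fix}U$, the estimate supplied by the proof of Lemma \ref{lem:LB1}, namely $\langle z-x,UT(x)-x\rangle-\frac{1}{\nu}\|UT(x)-x\|^2\geq 0$, rearranges to $\nu\langle z-x,UT(x)-x\rangle\geq\|UT(x)-x\|^2$. Since $\operatorname{Fix}UT\neq\emptyset$, this is precisely condition \eqref{prop:condA:eq} for the operator $UT$ with parameter $\nu$, so Proposition \ref{prop:condA} certifies that $UT$ is a $\nu$-relaxed cutter. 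The final assertion then costs nothing: by the very definition of a relaxed cutter, $UT$ being a $\nu$-relaxation of a cutter means precisely that $(UT)_{1/\nu}$ is that cutter.

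The genuinely delicate step is the reverse inclusion $\operatorname{Fix}UT\subseteq\operatorname{Fix}T\cap\operatorname{Fix}U$. When $\lambda,\mu\in(0,2)$ both operators are quasi-nonexpansive and this is routine, but once a relaxation parameter exceeds $2$ the product is only demicontractive, and it is exactly the inequality $\lambda+\mu-\lambda\mu>0$ (equivalently $\nu>0$, guaranteed by $\lambda\mu<4$) that keeps the argument alive. Everything else is bookkeeping built on Proposition \ref{prop:condA} and Lemma \ref{lem:LB1}. As a cross-check, the same inclusion can alternatively be extracted directly from Lemma \ref{lem:LB1}: for $x\in\operatorname{Fix}UT$ the left-hand side vanishes, forcing the squared-norm term to vanish, and since $\lambda\neq\mu$ when $\max\{\lambda,\mu\}\geq 2$ (otherwise $\lambda\mu\geq 4$) the resulting linear relation again collapses to $a=0$.
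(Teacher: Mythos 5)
Your argument is correct. The paper itself does not prove this theorem --- it defers to \cite[Theorem 3.8]{Ceg23} --- but the cutter part of your proof is exactly the route the paper sets up: the inequality $\langle z-x,c\rangle-\frac{1}{\nu}\|c\|^{2}\geq 0$ established inside the proof of Lemma \ref{lem:LB1} (equation \eqref{pr:lem:LB1:ineq}, which is indeed stronger than the lemma's stated conclusion, so you are right to invoke the proof rather than the statement) is precisely condition \eqref{prop:condA:eq} for $UT$ with parameter $\nu$, and Proposition \ref{prop:condA} applies once the fixed point set is identified. The genuinely valuable addition is your self-contained proof of $\operatorname{Fix}UT\subseteq\operatorname{Fix}T\cap\operatorname{Fix}U$: applying \eqref{prop:condA:eq} to $T$ at $x$ and to $U$ at $T(x)$ and chaining gives $\bigl(\tfrac{1}{\lambda}+\tfrac{1}{\mu}-1\bigr)\|T(x)-x\|^{2}\leq 0$, and your observation that $\lambda+\mu-\lambda\mu>0$ follows from AM--GM together with $\lambda\mu<4$ (equivalently, $\nu>0$) is exactly the point that keeps the argument alive when one relaxation parameter exceeds $2$; the order of the two steps (fixed points first, then the cutter property) is also handled correctly. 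The only cosmetic caveat is in your closing cross-check: the vanishing of the squared norm in \eqref{lem:LB1:ineq} with $b=-a$ forces $a=0$ only because the relevant coefficient ($\alpha$ in the ``$+$'' case, $\bar\alpha$ in the ``$-$'' case) is nonzero, which requires the small case analysis you indicate; since this is offered only as an alternative, it does not affect the validity of the main proof.
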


\begin{remark}\label{rem:BDP22} \rm
  Recall that a conically averaged operator is a $\theta$-relaxation of a nonexpansive operator, where $\theta > 0$ \cite[Definition 2.1]{BDP22}. The result of \cite[Proposition 2.15]{BDP22} shows that if $T$ and $U$ are $\theta_1$- and $\theta_2$-conically averaged and $\theta_1 \theta_2 < 1$, then $TU$ is $\theta$-conically averaged with
      \begin{equation}\label{}
        \theta = \frac{\theta_1 + \theta_2 - 2 \theta_1 \theta_2}{1 - \theta_1 \theta_2}.
      \end{equation}
  On the other hand, it is not difficult to see that $T$ and $U$ are $\lambda = 2\theta_1$ and $\mu = 2\theta_2$ relaxations of some firmly nonexpansive operators. Moreover, firmly nonexpansive operators having  a  fixed point are cutters. Thus the inequality $\theta_1 \theta_2 < 1$ translates  into  $\lambda \mu < 4$ and $\nu$ of Theorem \ref{thm:main} satisfies $\nu = 2 \theta$.
\end{remark}

\subsection[Regular Sets]{\label{sec:regular_sets}Regular Sets}

\begin{definition}
\label{d-SR}%
%TCIMACRO{\TeXButton{rm}{\rm\ }}%
%BeginExpansion
\rm\ %
%EndExpansion
Let $A,B\subseteq \mathcal{H}$ be  closed and convex, and  such that $A\cap B\neq \emptyset $. Let
$S\subseteq \mathcal{H}$ be nonempty. We say that the  pair  $\{A,B\}$ is:

\begin{enumerate}
\item[(a)] \emph{regular} over $S$ if for any sequence $\{x^{k}\}_{k=0}^{\infty
}\subseteq S$, we have%
\begin{equation}
\lim_{k \to \infty} \max \{d(x^{k},A),d(x^{k},B)\}=0 \quad \Longrightarrow \quad
\lim_{k \to \infty}  d(x^{k},A \cap B)=0\text{;}
\end{equation}
\item[(b)] \emph{$\kappa $-linearly regular} over $S$, where $\kappa >0$, if for
for arbitrary $x\in S$, we have%
\begin{equation}
d(x, A \cap B)\leq \kappa \max \{d(x,A),d(x,B)\}\text{.}
\end{equation}
\end{enumerate}

If any of the above regularity conditions holds for $S=\mathcal{H}$, then we
omit the phrase \textquotedblleft over $S$\textquotedblright. If any of the
above regularity conditions holds for every bounded subset $S\subseteq
\mathcal{H}$, then we precede the corresponding term with the adverb
\textquotedblleft boundedly\textquotedblright\ while omitting the phrase
\textquotedblleft over $S$\textquotedblright\ (we allow $\kappa $ to depend
on $S$ in (b)).
\end{definition}

Below we list a few known examples of regular pairs of sets. For an extended list with more than two sets, see \cite{BB96} or \cite{BNP15}.

\begin{example} \rm \label{ex:RegSets}
Let $A,B\subseteq \mathcal{H}$ be closed and convex, and such that $A\cap B\neq \emptyset $.

\begin{enumerate}
\item[$\mathrm{(i)}$] If $\dim \mathcal{H}<\infty $, then $\{A, B\}$ is boundedly regular;

\item[$\mathrm{(ii)}$] If $A$ and $B$ are half-spaces, then $\{A, B\}$ is linearly regular;

\item[$\mathrm{(iii)}$] If $A \cap \operatorname{int}(B)\neq \emptyset$, then $\{A, B\}$ is boundedly linearly regular;

\item[$\mathrm{(iv)}$] If $\dim \mathcal{H}<\infty $, $A$ is a half-space and $A\cap \operatorname{ri}B \neq \emptyset $, then $\{A, B\}$ is boundedly linearly regular.

\item [$\mathrm{(v)}$] If $\dim \mathcal{H}<\infty $ and  the pair $\{A, B\}$ is transversal, that is, when $\operatorname{ri}A \cap \operatorname{ri} B \neq \emptyset$,  then $\{A, B\}$ is boundedly linearly regular.

\end{enumerate}
\end{example}

\subsection[Regular Operators]{\label{sec:regular_operators}Regular Operators}

The following definition can be found in \cite[Definition 3.1]{CRZ18}.

\begin{definition}
\label{d-R}%
%TCIMACRO{\TeXButton{rm}{\rm\ }}%
%BeginExpansion
\rm\ %
%EndExpansion
Let $S\subseteq \mathcal{H}$ be nonempty. We say that an operator $T\colon
\mathcal{H}\rightarrow \mathcal{H}$ is:

\begin{enumerate}
\item[(a)] \textit{weakly regular} over $S$ if for any sequence $%
\{x^{k}\}_{k=0}^{\infty }\subseteq S$, we have
\begin{equation}
\left( x^{k}\rightharpoonup y\text{ and }\Vert T(x^{k})-x^{k}\Vert
\rightarrow 0\text{ as }k\rightarrow \infty \right) \quad \Longrightarrow
\quad y\in \operatorname{Fix}T\text{;}  \label{e-WR}
\end{equation}

\item[(b)] \textit{regular} over $S$ if for any sequence $%
\{x^{k}\}_{k=0}^{\infty }\subseteq S$, we have
\begin{equation}
\lim_{k\rightarrow \infty }\Vert T(x^{k})-x^{k}\Vert =0\quad \Longrightarrow
\quad \lim_{k\rightarrow \infty }d(x^{k},\operatorname{Fix}T)=0\text{;}
\label{e-R}
\end{equation}

\item[(c)] $\delta $-\textit{linearly regular} over $S$, where $\delta >0$,
if for arbitrary $x\in S$, we have
\begin{equation}
\Vert T(x)-x\Vert \geq \delta d(x,\operatorname{Fix}T)\text{.}  \label{e-LR}
\end{equation}%
The constant $\delta $ is called a \textit{modulus} of the linear regularity%
\textit{\ of }$T$ over $S$.
\end{enumerate}
If any of the above regularity conditions holds for $S=\mathcal{H}$, then we
omit the phrase \textquotedblleft over $S$\textquotedblright . If any of the
above regularity conditions holds for every bounded subset $S\subseteq
\mathcal{H}$, then we precede the corresponding term with the adverb
\textquotedblleft boundedly\textquotedblright\ while omitting the phrase
\textquotedblleft over $S$\textquotedblright\ (we allow $\delta $ to depend
on $S$ in (c)).
\end{definition}

\begin{remark}[Demiclosedness Principle] \label{rem:DCP} \rm
  Note that the condition ``$I - T$ is demiclosed at $0$'' is an equivalent way of saying that the operator $T$ is weakly regular. A weakly regular operator is also called an operator satisfying the \textit{demiclosedness principle}; see, e.g., \cite[Definition 4.2]{Ceg15}. The latter notion is usually applied to nonexpansive operators  which are weakly regular;  see, e.g., \cite[Theorem 4.27]{BC17}.
\end{remark}

\begin{remark}[Metric Subregularity] \label{rem:MetSub} \rm
  Observe that linear regularity of the operator $T \colon \mathcal H \to \mathcal H$ over $B(z, r)$, where $z \in \operatorname{Fix} T$ and $r > 0$, is nothing else  but the  metric subregularity of $F := I -T$ at $(z,0)$. Recall that a set-valued mapping $F \colon \mathcal H\rightrightarrows \mathcal H$ is \emph{metrically subregular} at $(\bar x, \bar y) \in \operatorname{gph} F$ if there are $\delta >0$ and $r > 0$ such that for all $x \in B(\bar x,r)$, we have
  \begin{equation}\label{}
    d(\bar y, F(x)) \geq \delta d(x, F^{-1}(\bar y));
  \end{equation}
  see, for example, \cite[Definition 3.17]{Iof16}.
\end{remark}

Below we present a few examples of regular operators with an emphasis on the linear regularity.

\begin{example}[Linear Operators] \rm
  Let $T\colon \mathcal H \to \mathcal H$ be a bounded linear operator. Then $T$ is $\delta$-linearly regular if and only if the range $\mathcal R(I - T)$ is closed.
  In such a case we can put $\delta = \inf\{\|x - T(x)\| \colon x \in (\operatorname{Fix}T)^\perp, \|x\| = 1\} > 0$. This follows directly from the closed range theorem; see, for example, \cite[Theorem 8.18]{Deu01}. In particular, when $\dim \mathcal H < \infty$, then $T$ is linearly regular and $\delta$ is the smallest positive singular value of $I - T$ \cite[Lemma 3.4]{CRZ20}.
\end{example}

\begin{example}[Polyhedral Graph] \rm
  Let $\mathcal H = \mathbb R^d$ and let $T\colon \mathcal H \to \mathcal H$ be an operator with $\operatorname{Fix}T \neq \emptyset$. Assume that the graph of $T$ is a polyhedral set, that is, it can be represented as a finite intersection of closed half-spaces and/or hyperplanes. Then $T$ is boundedly linearly regular; see \cite[Proposition 1]{CET21}.
\end{example}

\begin{example}[Strict Contraction Cont.] \rm
  Following Example \ref{ex:contraction}, let $T \colon \mathcal H \to \mathcal H$ be an $\alpha$-strict contraction for some $\alpha \in (0,1)$. Then $T$ is $(1-\theta)$-linearly regular. Indeed, knowing that $T$ has only one fixed point, say $\operatorname{Fix}T = \{z\}$, for any $x \in \mathcal H$, we get
  \begin{equation}\label{}
    \|x - T(x)\| \geq \|x - z\| - \|T(x) - T(z)\| \geq (1-\alpha)\|x-z\| = (1 - \alpha) d(x, \operatorname{Fix} T).
  \end{equation}
\end{example}

\begin{example}[Subgradient Projection Cont.] \rm
  Let $P_f$ be a subgradient projection as defined in Example \ref{ex:subProj}. Assume that $\partial f$ is uniformly bounded on bounded sets (see \cite[Proposition 7.8]{BB96}). Then the following statements hold:
    \begin{enumerate}
        \item[$\mathrm{(i)}$] \textrm{$P_{f}$ is weakly regular. }

        \item[$\mathrm{(ii)}$] \textrm{If $f$ is $\alpha $-strongly convex, where $\alpha >0$, then $P_{f}$ is boundedly regular. }

        \item[$\mathrm{(iii)}$] \textrm{If $f(z)<0$ for some $z$, then $P_{f}$ is boundedly linearly regular. }
    \end{enumerate}
    For the proof, see \cite[Example 2.11]{CRZ20}.
\end{example}

\begin{example}[Proximal Operator Cont.]\rm
  Let $f\colon\mathbb R^d \to \mathbb R \cup\{+\infty\}$ be a proper lower semicontinuous and convex function which attains its minimum. Moreover, let $f^*$ be its minimal value and denote the set of its minimizers by $M$. Let $t > 0$ and, following Example \ref{ex:proximalOp}, consider the proximal operator $\operatorname{prox}\nolimits_{tf}(x) = \operatorname{argmin}_{y\in \mathcal H} \left( f(y)+\frac 1 {2t} \|y-x\|^2 \right).$ Clearly, $\operatorname{Fix}(\operatorname{prox}_{tf}) = M$. Assume the following subdifferential error bound: there are $L >0$ and $\varepsilon >0$ such that
  \begin{equation}\label{}
    d(x, M) \leq L \cdot d(0, \partial f(x))
  \end{equation}
  for all $x \in S(f, f^* + \varepsilon) = \{x \colon f(x) \leq f^* + \varepsilon\}$. Then, in view of \cite[Theorem 3.4]{DL18}, we get
  \begin{equation}\label{}
    \|x - \operatorname{prox}\nolimits_{tf}(x)\| \geq \frac{t}{L+t} d(x, M)
  \end{equation}
  for all $x \in S(f, f^* + \varepsilon)$. In particular, the proximal operator $\operatorname{prox}_{tf}$ is $\delta$-linearly regular over the sublevel set $S(f, f^* + \varepsilon)$, where $\delta = \frac{t}{L+t}$.
\end{example}

\begin{remark} \rm
    The regularity of a pair of sets $\{A, B\}$ can in fact be introduced using the regularity of a certain operator. To see this, following \cite[Remark 2.13]{KRZ17} or \cite[Remark 3.3]{CRZ18}, consider the operator $T$ defined by
    \begin{equation}\label{}
      T(x) := \begin{cases}
                P_A(x), & \text{if } d(x, A) \geq d(x, B)\\
                P_B(x), & \text{otherwise.}
              \end{cases}
    \end{equation}
    Clearly, $T$ is a cutter and $\operatorname{Fix} T = A \cap B$. Moreover, $T$ is (linearly) regular over $S$ if and only if $\{A, B\}$ is (linearly) regular over $S$.
\end{remark}

\subsection[Fejer Monotone Sequences]{Fej\'{e}r Monotone Sequences}

\begin{definition} \rm
  We say that a sequence $\{x^{k}\}_{k=0}^{\infty }$ is \emph{Fej\'{e}r monotone} with respect to a nonempty subset $C\subseteq \mathcal{H}$ if $\Vert x^{k+1}-z\Vert \leq \Vert x^{k}-z\Vert $ for all $z\in C$ and $k=0,1,2,\ldots$.
\end{definition}

 An extended  version of the following lemma can be found in \cite[Theorem 2.16]{BB96}.

\begin{lemma} \label{lem:FM}
   Let $\{x^{k}\}_{k=0}^{\infty }\subseteq \mathcal{H}$ be Fej\'{e}r monotone with respect to  $C\subseteq \mathcal{H}$. Then:

\begin{enumerate}
\item[$\mathrm{(i)}$] $\{x^{k}\}_{k=0}^{\infty }$ converges weakly to a point $x^*\in C$ if and
only if all its weak cluster points belong to $C$;

\item[$\mathrm{(ii)}$] $\{x^{k}\}_{k=0}^{\infty }$ converges strongly to a point $x^*\in C$ if and
only if $\lim_{k}d(x^{k},C)=0$;

\item[$\mathrm{(iii)}$] If $\{x^{k}\}_{k=0}^{\infty }$ converges strongly to a point $x^*\in C$, then
\begin{equation}\label{}
  \|x^k - x^*\| \leq 2 d(x^k, C), \quad k = 0,1,2,\ldots .
\end{equation}
\end{enumerate}
\end{lemma}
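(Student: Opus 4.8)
The plan is to reduce all three parts to two elementary consequences of Fej\'{e}r monotonicity plus a single key estimate. First I would record that, by the defining inequality, for each fixed $z \in C$ the real sequence $\{\|x^{k}-z\|\}_{k=0}^{\infty}$ is non-increasing and bounded below by $0$, hence convergent; in particular $\{x^{k}\}_{k=0}^{\infty}$ is bounded. Second, I would establish the estimate that powers parts (ii) and (iii): for any $m \geq k$ and any $z \in C$, Fej\'{e}r monotonicity gives $\|x^{m}-z\| \leq \|x^{k}-z\|$, so the triangle inequality yields $\|x^{m}-x^{k}\| \leq \|x^{m}-z\| + \|x^{k}-z\| \leq 2\|x^{k}-z\|$; since the left-hand side does not depend on $z$, taking the infimum over $z \in C$ gives
\begin{equation}
\|x^{m}-x^{k}\| \leq 2\, d(x^{k}, C) \qquad \text{for all } m \geq k. \tag{$\ast$}
\end{equation}

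For part (i), the forward implication is immediate, since a weakly convergent sequence has its weak limit as its only weak cluster point. For the converse, I would use boundedness to guarantee the existence of at least one weak cluster point and then prove that it is unique. Given two weak cluster points $u$ and $v$, both of which lie in $C$ by hypothesis, the sequences $\{\|x^{k}-u\|\}$ and $\{\|x^{k}-v\|\}$ converge by the first observation, so the identity $2\langle x^{k}, u-v\rangle = \|x^{k}-v\|^{2} - \|x^{k}-u\|^{2} + \|u\|^{2} - \|v\|^{2}$ shows that $\langle x^{k}, u-v\rangle$ converges. Evaluating this limit along a subsequence converging weakly to $u$ and along one converging weakly to $v$ forces $\langle u, u-v\rangle = \langle v, u-v\rangle$, i.e. $\|u-v\|^{2}=0$, so $u=v$. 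A bounded sequence with a unique weak cluster point converges weakly to it, and that point lies in $C$, as required.

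Parts (ii) and (iii) then drop out of $(\ast)$. In (ii) the forward implication is trivial from $d(x^{k},C) \leq \|x^{k}-x^{*}\|$. For the converse, $(\ast)$ shows that $\|x^{m}-x^{k}\| \to 0$ uniformly in $m \geq k$ as $k \to \infty$, so $\{x^{k}\}$ is Cauchy and converges strongly to some $x^{*}$; the $1$-Lipschitz continuity of $d(\cdot,C)$ then gives $d(x^{*},C) = \lim_{k} d(x^{k},C) = 0$, so that $x^{*} \in C$ by closedness of $C$. For (iii), I would fix $k$ in $(\ast)$ and let $m \to \infty$, using continuity of the norm and $x^{m}\to x^{*}$, to obtain $\|x^{k}-x^{*}\| \leq 2\,d(x^{k},C)$.

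I expect the only genuinely delicate step to be the uniqueness argument in part (i): it is the one place where one must combine the monotone convergence of $\{\|x^{k}-z\|\}$ with a weak-cluster-point extraction, and it is essential that \emph{both} candidate cluster points be known to lie in $C$ so that both distance sequences converge. Everything else is a direct and routine consequence of the estimate $(\ast)$.
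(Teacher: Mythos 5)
Your proof is correct and complete. Note, however, that the paper does not actually prove Lemma \ref{lem:FM}: it only points to \cite[Theorem 2.16]{BB96}, and your argument is essentially the standard one found there --- the Opial-type uniqueness argument for weak cluster points in part (i) (using that $\{\|x^k-z\|\}$ converges for every $z\in C$), and the estimate $\|x^m-x^k\|\leq 2\,d(x^k,C)$ for $m\geq k$ driving both (ii) and (iii). One small caveat: in the converse direction of (ii) you conclude $x^*\in C$ ``by closedness of $C$,'' whereas the lemma as stated only assumes that $C$ is a nonempty subset of $\mathcal H$. Closedness is genuinely needed for that implication: take $C=(0,1)\subset\mathbb R$ and $x^k=-1/k$, which is Fej\'{e}r monotone with respect to $C$ and satisfies $d(x^k,C)\to 0$, yet converges to $0\notin C$. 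This is an imprecision in the statement (inherited from the fact that \cite{BB96} works with closed convex $C$) rather than a flaw in your reasoning, and it is harmless in the paper's applications, where $C$ is always a fixed point set of a relaxed cutter and hence closed and convex; still, you should state the closedness hypothesis explicitly rather than invoke it tacitly.
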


\section[Main Result]{Main Result\label{sec:main}}

We begin with the following lemma.

\begin{lemma} \label{lem:LB2}
  Let $T \colon \mathcal H \to \mathcal H$ and $U \colon \mathcal H \to \mathcal H$ be $\lambda$- and $\mu$-relaxed cutters, respectively, where $\lambda, \mu >0$. Assume that $\lambda \mu < 4$ and $\operatorname{Fix}T\cap \operatorname{Fix}U\neq \emptyset$. Then, for each $x \in \mathcal H$ such that $x \notin \operatorname{Fix}T \cap \operatorname{Fix}U$, we have
  \begin{equation}\label{lem:LB2:ineq}
    \|UT(x) - x\|
    \geq \left(\frac{|\alpha|}{1+\beta \sqrt{\nu}}\right)^2
    \frac{\max \{\|T(x) - x\|^2,\ \|UT(x) - T(x)\|^2\}}
    {d(x, \operatorname{Fix}T \cap \operatorname{Fix} U)},
  \end{equation}
  where $\nu$ is defined by \eqref{lem:ni:eq1}, and where
\begin{equation}\label{lem:LB2:ab} \textstyle
  \alpha := \sqrt{\frac{1}{\lambda }-\frac{1}{\nu }}-\sqrt{\frac{1}{\mu }-\frac{1}{\nu }} \neq 0
  \quad \text{and} \quad
  \beta :=\max \left\{\sqrt{\frac{1}{\lambda }-\frac{1}{\nu }}, \sqrt{\frac{1}{\mu }-\frac{1}{\nu }} \right\} > 0.
\end{equation}
\end{lemma}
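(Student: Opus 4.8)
The plan is to bound $\|UT(x)-x\|$ from below by combining Lemma \ref{lem:LB1}, the Cauchy--Schwarz inequality, and the fact (Theorem \ref{t-comp}) that $UT$ is a $\nu$-relaxed cutter, and then to squeeze out the constant $|\alpha|/(1+\beta\sqrt{\nu})$ by a sharp application of the triangle inequality. Throughout I abbreviate $a:=T(x)-x$, $b:=UT(x)-T(x)$ and $c:=UT(x)-x$, so that $a+b=c$, and I write $p:=\sqrt{1/\lambda-1/\nu}$ and $q:=\sqrt{1/\mu-1/\nu}$ (these are real and nonnegative since $\nu\ge\max\{\lambda,\mu\}$ by \eqref{lem:ni:max}), so that $\alpha=p-q$ and $\beta=\max\{p,q\}$. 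I fix an \emph{arbitrary} $z\in\operatorname{Fix}T\cap\operatorname{Fix}U$, derive an estimate depending on $\|z-x\|$, and only at the very end pass to the infimum over such $z$, which equals $d(x,\operatorname{Fix}T\cap\operatorname{Fix}U)$; this avoids any discussion of attainment of a nearest point.

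First I record two upper bounds. By Lemma \ref{lem:LB1} followed by Cauchy--Schwarz,
\[
\|pa\pm qb\|^{2}\le\langle z-x,c\rangle\le\|z-x\|\,\|c\|,
\]
so $\|pa\pm qb\|\le\sqrt{\|z-x\|\,\|c\|}$. Next, since $UT$ is a $\nu$-relaxed cutter with $z\in\operatorname{Fix}UT=\operatorname{Fix}T\cap\operatorname{Fix}U$, Proposition \ref{prop:condA} applied to $UT$ gives $\nu\langle z-x,c\rangle\ge\|c\|^{2}$, and another use of Cauchy--Schwarz yields $\|c\|^{2}\le\nu\|z-x\|\,\|c\|$, hence $\beta\|c\|\le\beta\sqrt{\nu}\,\sqrt{\|z-x\|\,\|c\|}$.

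The heart of the argument is a lower bound on $\|pa\pm qb\|$ in terms of $\max\{\|a\|,\|b\|\}$. Eliminating $b$ via $b=c-a$ gives the identity $pa\pm qb=(p\mp q)a\pm qc$, and eliminating $a$ via $a=c-b$ gives $pa\pm qb=pc-(p\mp q)b$. Since $p,q\ge0$, in both the ``$+$'' and the ``$-$'' case we have $|p\mp q|\ge|p-q|=|\alpha|$, whence
\[
|\alpha|\,\|a\|\le|p\mp q|\,\|a\|=\bigl\|(pa\pm qb)\mp qc\bigr\|\le\|pa\pm qb\|+q\|c\|,
\]
and likewise $|\alpha|\,\|b\|\le\|pa\pm qb\|+p\|c\|$. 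Taking maxima and using $\beta=\max\{p,q\}$ produces $|\alpha|\max\{\|a\|,\|b\|\}\le\|pa\pm qb\|+\beta\|c\|$, which is the one inequality that genuinely exploits the structure $c=a+b$ and treats both sign cases of Lemma \ref{lem:LB1} uniformly.

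Finally I combine the three estimates. Writing $m:=\max\{\|a\|,\|b\|\}$ and $s:=\|z-x\|\,\|c\|$, the two upper bounds feed into the lower bound to give
\[
|\alpha|\,m\le\|pa\pm qb\|+\beta\|c\|\le\sqrt{s}+\beta\sqrt{\nu}\,\sqrt{s}=(1+\beta\sqrt{\nu})\sqrt{s}.
\]
Squaring yields $\alpha^{2}m^{2}\le(1+\beta\sqrt{\nu})^{2}\|z-x\|\,\|c\|$; since this holds for every $z\in\operatorname{Fix}T\cap\operatorname{Fix}U$ while $m$ and $\|c\|$ do not depend on $z$, taking the infimum over $z$ and recalling that the intersection is closed (so the distance is positive for $x\notin\operatorname{Fix}T\cap\operatorname{Fix}U$) gives exactly \eqref{lem:LB2:ineq}. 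The main obstacle here is obtaining the constant \emph{precisely}: squaring the sum $\|pa\pm qb\|+\beta\|c\|$ term by term, via $(u+v)^{2}\le2u^{2}+2v^{2}$, would only deliver the weaker constant $\alpha^{2}/\bigl(2(1+\beta^{2}\nu)\bigr)$, and the identity $2(1+\beta^{2}\nu)-(1+\beta\sqrt{\nu})^{2}=(1-\beta\sqrt{\nu})^{2}\ge0$ shows this is strictly worse unless $\beta\sqrt{\nu}=1$. The trick that recovers the sharp factor $(1+\beta\sqrt{\nu})^{-2}$ is to keep both upper bounds in the common ``$\sqrt{s}$'' form and add them \emph{before} squaring.
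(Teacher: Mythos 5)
Your proof is correct and follows essentially the same route as the paper's: the same decomposition $a,b,c$, the same key triangle-inequality bound $|\alpha|\max\{\|a\|,\|b\|\}\le\|pa\pm qb\|+\beta\|c\|$, the same combination of Lemma \ref{lem:LB1} with the Cauchy--Schwarz inequality and the bound $\|c\|\le\nu\|z-x\|$, and the same "add before squaring" step yielding the constant $(1+\beta\sqrt{\nu})^{-2}$. The only (harmless) differences are cosmetic: you treat both sign cases at once via $|p\mp q|\ge|p-q|$ where the paper argues the two cases separately, and you pass to the infimum over $z$ at the end where the paper takes $z=P_{\operatorname{Fix}T\cap\operatorname{Fix}U}(x)$.
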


\begin{proof}
  Similarly  to what we did  before, we denote $a:=T(x)-x$, $b:=UT(x)-T(x)$ and $c:=UT(x)-x$. We  first show  that
  \begin{equation}\label{pr:lem:LB2:toShow1} \textstyle
    \left\| \sqrt{\frac{1}{\lambda }-\frac{1}{\nu }}a
    \pm \sqrt{\frac{1}{\mu }-\frac{1}{\nu }} b\right\|
    +\beta \|c\|
    \geq |\alpha| \max\{\|a\|, \|b\|\},
  \end{equation}
  where, as in Lemma \ref{lem:LB1}, the sign ``$\pm $'' should be replaced by ``$+$'' when $\max \{\lambda, \mu\} \geq 2$ and by ``$-$'' when $\max \{\lambda, \mu\} < 2$.

  Indeed, assume that $\max \{\lambda, \mu\} \geq 2$. Then the triangle inequality yields
\begin{align}
  \textstyle
  \left\| \sqrt{\frac{1}{\lambda }-\frac{1}{\nu}}a + \sqrt{\frac{1}{\mu } - \frac{1}{\nu }}b\right\|
  & = \textstyle \left\| \alpha a
  + \sqrt{\frac{1}{\mu } - \frac{1}{\nu }}(a + b) \right\| \nonumber \\
  & \textstyle \geq |\alpha| \cdot \|a\| - \sqrt{\frac{1}{\mu }-\frac{1}{\nu}}\|c\| \nonumber \\
  & \textstyle \geq |\alpha| \cdot \|a\| - \beta\|c\|.
\end{align}
Analogously, we get
\begin{align}
  \textstyle
  \left\| \sqrt{\frac{1}{\lambda }-\frac{1}{\nu}}a + \sqrt{\frac{1}{\mu } - \frac{1}{\nu }}b\right\|
  & \textstyle \geq |\alpha| \cdot \|b\| - \beta\|c\|.
\end{align}
This shows \eqref{pr:lem:LB2:toShow1} in the first case.

Assume now that $\max \{\lambda, \mu\} < 2$ and put $ \bar \alpha := \sqrt{\frac{1}{\lambda} - \frac{1}{\nu }} + \sqrt{\frac{1}{\mu }-\frac{1}{\nu}}$. Note that $\bar \alpha \geq |\alpha|$. By again referring to the triangle inequality, we obtain
\begin{align}
  \textstyle
  \left\| \sqrt{\frac{1}{\lambda }-\frac{1}{\nu}}a - \sqrt{\frac{1}{\mu } - \frac{1}{\nu }}b\right\|
  & = \textstyle \left\| \bar\alpha a
  - \sqrt{\frac{1}{\mu } - \frac{1}{\nu }}(a + b) \right\| \nonumber \\
  & \textstyle \geq |\bar \alpha| \cdot \|a\| - \sqrt{\frac{1}{\mu }-\frac{1}{\nu}}\|c\| \nonumber \\
  & \textstyle \geq |\alpha| \cdot \|a\| - \beta\|c\|.
\end{align}
Analogously, we get
\begin{align}
  \textstyle
  \left\| \sqrt{\frac{1}{\lambda }-\frac{1}{\nu}}a - \sqrt{\frac{1}{\mu } - \frac{1}{\nu }}b\right\|
  & \textstyle \geq |\alpha| \cdot \|b\| - \beta\|c\|.
\end{align}
This shows \eqref{pr:lem:LB2:toShow1} in the second case.

On the other hand, using Theorem \ref{t-comp}, we see that the operator $(UT)_{\frac 1 \nu}$ is a cutter with $\operatorname{Fix}UT = \operatorname{Fix}T\cap \operatorname{Fix}U$. Equivalently, $(UT)_{\frac 1 \nu}$ is  a ($-1$)-demicontraction;  see Proposition \ref{prop:demicontractive}. Thus for any $z \in \operatorname{Fix}T\cap \operatorname{Fix}U$, we have
\begin{equation}\label{pr:lem:LB2:c}
  \|c\| = \nu \|(UT)_{\frac 1 \nu}(x) - x\| \leq \nu \|z-x\|.
\end{equation}
Consequently, by combining \eqref{lem:LB1:ineq}, \eqref{pr:lem:LB2:c} and the Cauchy-Schwarz inequality, we get
\begin{align}  \textstyle
  \left\| \sqrt{\frac{1}{\lambda }-\frac{1}{\nu }}a
    \pm \sqrt{\frac{1}{\mu }-\frac{1}{\nu }} b\right\|
    + \beta \|c\|
  & \leq \sqrt{\langle z-x, c\rangle} + \beta \|c\|  \nonumber \\
  & \leq \sqrt{\|z - x\| \cdot \|c\| } + \beta \sqrt{\|c\| \cdot (\nu \|z-x\|)} \nonumber \\
  & = (1 + \beta \sqrt{\nu}) \sqrt{\|z - x\| \cdot \|c\| }.
\end{align}
This and \eqref{pr:lem:LB2:toShow1} yield
\begin{equation}\label{}
  \|c\| \geq \left(\frac{|\alpha|}{1+\beta \sqrt{\nu}}\right)^2 \frac{\max\{\|a\|^2, \|b\|^2\}}{\|z-x\|}.
\end{equation}
In particular, the above-mentioned estimate holds for $z := P_{\operatorname{Fix}T\cap \operatorname{Fix}U}(x)$ in which case $\|z - x\| = d(x, \operatorname{Fix}T\cap \operatorname{Fix}U)$. This completes the proof.
\end{proof}

\bigskip
We are now ready to present our main result. We note that a result analogous to (i) was proved in \cite[Theorem 3.8]{Ceg23}.  We include part (i) for consistency. To the best of our knowledge, parts (ii) and (iii) with $ \max \{\lambda, \mu \} > 2 $ of the following theorem are new.

\begin{theorem}[Regularity] \label{thm:main}
Let $T \colon \mathcal H \to \mathcal H$ and $U \colon \mathcal H \to \mathcal H$ be $\lambda$- and $\mu$-relaxed cutters, respectively, where $\lambda, \mu >0$. Assume that $\lambda \mu < 4$ and $\operatorname{Fix}T\cap \operatorname{Fix}U\neq \emptyset$. Moreover, let $z\in \operatorname{Fix}T\cap \operatorname{Fix}U$, let $r >0$ and put $R :=\max \{r,(\lambda -1)r\}$. The following statements hold:

\begin{enumerate}
\item[$\mathrm{(i)}$] If $T$ is weakly regular over $B(z,r )$ and $U$ is weakly regular over $B(z,R)$, then the operator $UT$ is weakly regular over $B(z,r )$.

\item[$\mathrm{(ii)}$] If $T$ is regular over $B(z, r)$, $U$ is regular over $B(z,R)$ and the family $\{\operatorname{Fix}T,\operatorname{Fix}U\}$ is regular over $B(z,r)$, then the operator $UT$ is regular over $B(z,r)$.

\item[$\mathrm{(iii)}$] If $T$ is $\delta _{1}$-linearly regular over $B(z,r)$, $U$ is $\delta _{2}$-linearly regular over $B(z,R)$ and the family $\{\operatorname{Fix}T,\operatorname{Fix}U\}$ is $\kappa $-linearly regular over $B(z,r)$, where $\delta _{1},\delta _{2} \in (0,1]$ and $\kappa >0$, then the operator $UT$ is $\delta$-linearly regular over $B(z,r)$, where
    \begin{equation} \label{thm:main:delta} \displaystyle
      \delta := \left(\frac{|\alpha|\min \{\delta_1,\delta_2\}} {2 \kappa (1+\beta \sqrt{\nu }) } \right)^2,
    \end{equation}
$\nu$ is given by \eqref{lem:ni:eq1},  and  $\alpha$ and $\beta$ are given by \eqref{lem:LB2:ab}.
\end{enumerate}
\end{theorem}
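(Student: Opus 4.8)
The plan is to establish one auxiliary geometric fact and then run three parallel arguments, all powered by Lemma~\ref{lem:LB2}, Theorem~\ref{t-comp} and the regularity hypotheses. The auxiliary fact I would prove first is the \emph{range claim}: $T$ maps $B(z,r)$ into $B(z,R)$. Indeed, putting $u := T(x)-x$, Proposition~\ref{prop:condA} gives $\lambda\langle z-x,u\rangle \geq \|u\|^2$, so by Cauchy--Schwarz $\|u\| \leq \lambda\|z-x\|$, while expanding $\|T(x)-z\|^2 = \|x-z\|^2 - 2\langle z-x,u\rangle + \|u\|^2$ and using $\langle z-x,u\rangle \geq \frac1\lambda\|u\|^2$ yields $\|T(x)-z\|^2 \leq \|x-z\|^2 + (1-\tfrac2\lambda)\|u\|^2$. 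When $\lambda \leq 2$ the last term is nonpositive and $\|T(x)-z\| \leq \|x-z\| \leq r \leq R$; when $\lambda > 2$, inserting $\|u\| \leq \lambda\|x-z\|$ gives $\|T(x)-z\| \leq (\lambda-1)\|x-z\| \leq (\lambda-1)r = R$. This is exactly what licenses applying the regularity of $U$ over the larger ball $B(z,R)$ at the point $T(x)$.

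Writing $a := T(x)-x$, $b := UT(x)-T(x)$ and $c := UT(x)-x$, the engine is Lemma~\ref{lem:LB2}, which on $B(z,r)\setminus(\operatorname{Fix}T\cap\operatorname{Fix}U)$ reads $\|c\| \geq (|\alpha|/(1+\beta\sqrt{\nu}))^2 \max\{\|a\|^2,\|b\|^2\}/d(x,\operatorname{Fix}T\cap\operatorname{Fix}U)$; since $d(x,\cdot)\leq\|x-z\|\leq r$ is bounded on $B(z,r)$, controlling $\|c\|=\|UT(x)-x\|$ forces both $\|a\|$ and $\|b\|$ to be controlled. For part~(i), given $\{x^k\}\subseteq B(z,r)$ with $x^k\rightharpoonup y$ and $\|UT(x^k)-x^k\|\to 0$, the lemma gives $\|T(x^k)-x^k\|\to 0$ and $\|UT(x^k)-T(x^k)\|\to 0$; weak regularity of $T$ over $B(z,r)$ gives $y\in\operatorname{Fix}T$, and since $\|T(x^k)-x^k\|\to 0$ we also have $T(x^k)\rightharpoonup y$ with $\{T(x^k)\}\subseteq B(z,R)$, so weak regularity of $U$ over $B(z,R)$ gives $y\in\operatorname{Fix}U$. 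For part~(ii) the same two residual estimates combine with regularity of $T$ over $B(z,r)$ to give $d(x^k,\operatorname{Fix}T)\to 0$ and with regularity of $U$ over $B(z,R)$ to give $d(T(x^k),\operatorname{Fix}U)\to 0$; then $d(x^k,\operatorname{Fix}U)\leq\|x^k-T(x^k)\|+d(T(x^k),\operatorname{Fix}U)\to 0$, and regularity of the pair $\{\operatorname{Fix}T,\operatorname{Fix}U\}$ over $B(z,r)$ finishes the argument.

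Part~(iii) is where the constant must be tracked, and I expect the bookkeeping there to be the only genuine obstacle. For $x\in B(z,r)\setminus(\operatorname{Fix}T\cap\operatorname{Fix}U)$ I would bound $d(x,\operatorname{Fix}T)\leq\frac{1}{\delta_1}\|a\|$ and, via the triangle inequality together with $\delta_2$-linear regularity of $U$ at $T(x)$, $d(x,\operatorname{Fix}U)\leq\|a\|+\frac{1}{\delta_2}\|b\|$. Since $\delta_1,\delta_2\in(0,1]$, both are at most $\frac{2}{\min\{\delta_1,\delta_2\}}\max\{\|a\|,\|b\|\}$ — here the triangle inequality produces the factor $2$, and $\delta_2\leq 1$ lets the term $\|a\|$ be absorbed. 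Feeding this into $\kappa$-linear regularity of the pair gives $d(x,\operatorname{Fix}T\cap\operatorname{Fix}U)\leq\frac{2\kappa}{\min\{\delta_1,\delta_2\}}\max\{\|a\|,\|b\|\}$, equivalently $\max\{\|a\|,\|b\|\}\geq\frac{\min\{\delta_1,\delta_2\}}{2\kappa}\,d(x,\operatorname{Fix}T\cap\operatorname{Fix}U)$. Substituting this lower bound into Lemma~\ref{lem:LB2} yields $\|UT(x)-x\|\geq\delta\,d(x,\operatorname{Fix}T\cap\operatorname{Fix}U)$ with $\delta$ exactly as in~\eqref{thm:main:delta}, while for $x\in\operatorname{Fix}T\cap\operatorname{Fix}U$ the inequality is trivial since both sides vanish.
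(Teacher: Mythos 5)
Your proposal is correct and follows essentially the same route as the paper: the same range claim $T(B(z,r))\subseteq B(z,R)$ (you derive the demicontraction inequality and the bound $\|T(x)-x\|\leq\lambda\|z-x\|$ directly from Proposition \ref{prop:condA} rather than quoting Proposition \ref{prop:demicontractive}, but the computation is identical), the same use of Lemma \ref{lem:LB2} to control both residuals, and in part (iii) the same chain of estimates written in the reverse direction (bounding the distances above by the residuals instead of the residuals below by the distances), arriving at exactly the constant \eqref{thm:main:delta}.
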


\begin{proof}
  We begin with some preparatory calculations. First we show that for any $x\in B(z,r )$, we must have
  \begin{equation}\label{pr:thm:main:toShow1}
    T(x) \in B(z, R).
  \end{equation}
  Indeed, by Proposition \ref{prop:demicontractive}, $T$  is a  $\frac{\lambda - 2}{\lambda}$-demicontraction, that is,
  \begin{equation}\label{pr:thm:main:TisDC}
    \|T(x) - z\|^2 \leq \|x - z\|^2 + \frac{\lambda - 2}{\lambda} \|T(x) - x\|^2.
  \end{equation}
  If $\lambda \in (0,2)$, then, by definition, $R = r$. Moreover, $\frac{\lambda - 2}{\lambda} < 0$ and we see that $\|T(x) - z\| \leq \|x-z\| \leq r$. Assume now that $\lambda > 2$. Then $R = (\lambda-1)r > r$.  Since  $T_{\frac{1}{\lambda}}$ is a cutter, it is  a ($-1$)-demicontraction (by  Proposition \ref{prop:demicontractive}) and
  \begin{equation}\label{pr:thm:main:TisRelCutter}
    \|T(x) - x\| = \lambda \|T_{\frac{1}{\lambda}}(x) - x\| \leq \lambda \|z - x\|.
  \end{equation}
  Consequently,  using  \eqref{pr:thm:main:TisDC} and \eqref{pr:thm:main:TisRelCutter}, we get
  \begin{equation}\label{}
    \|T(x) - z\|^2 \leq \|x - z\|^2 + \lambda(\lambda - 2) \|z - x\|^2 \leq (\lambda - 1)^2 r^2 = R^2.
  \end{equation}
  This shows \eqref{pr:thm:main:toShow1}.  We now proceed to each of the individual statements separetely.

  \bigskip
  (i) Let the sequence $\{x^k\}_{k=0}^\infty \subset B(z, r)$ be such that
  \begin{equation}\label{}
    x^{k} \rightharpoonup y \quad \text{and} \quad \|UT(x^{k})-x^{k}\| \to 0
  \end{equation}
  as $k \to \infty$, where $y \in \mathcal H$.  Note that
  \begin{equation}\label{}
    d(x^k, \operatorname{Fix}T \cap \operatorname{Fix} U) \leq \|x^k - z\| \leq r.
  \end{equation}
   Then, thanks to  Lemma \ref{lem:LB2},  we get
  \begin{equation}\label{pr:thm:main:i:assumption}
    \|T(x^{k})-x^{k}\| \to 0 \quad \text{and} \quad \|UT(x^{k})-T(x^{k})\| \to 0.
  \end{equation}
  By assumption, the operator $T$ is weakly regular over $B(z, r)$. This implies that $y \in \operatorname{Fix} T$. On the other hand, we see that $T(x^k) \rightharpoonup y$ as $k \to \infty$ thanks to the first limit of \eqref{pr:thm:main:i:assumption}. Moreover, by \eqref{pr:thm:main:toShow1}, we see that $\{T(x^k)\}_{k=0}^\infty \subset B(z, R)$. By applying the weak regularity of $U$ over $B(z, R)$ to the sequence $\{T(x^k)\}_{k=0}^\infty$, we obtain $y \in \operatorname{Fix} U$. This shows that $y \in \operatorname{Fix} T \cap \operatorname{Fix} U$. Note that $\operatorname{Fix} UT = \operatorname{Fix}T \cap \operatorname{Fix} U$; compare with Theorem \ref{t-comp}. This completes the proof of part (i).

  \bigskip
  (ii) Let the sequence $\{x^k\}_{k=0}^\infty \subset B(z, r)$ be such that
  \begin{equation}\label{pr:thm:main:ii:assumption}
    \|UT(x^{k})-x^{k}\| \to 0
  \end{equation}
  as $k \to \infty$. Analogously to  case (i),  we get
  \begin{equation}\label{pr:thm:main:ii:TandU}
    \|T(x^{k})-x^{k}\| \to 0 \quad \text{and} \quad \|UT(x^{k})-T(x^{k})\| \to 0
  \end{equation}
  as $k \to \infty$, where again $\{T(x^k)\}_{k=0}^\infty \subset B(z, R)$. Applying the regularity of $T$ over $B(z,r)$ and the regularity of $U$ over $B(z, R)$, we get
  \begin{equation}\label{}
    d(x^k, \operatorname{Fix} T) \to 0 \quad \text{and} \quad d(T(x^k), \operatorname{Fix} T) \to 0
  \end{equation}
  as $k \to \infty$. Moreover, the triangle inequality and the definition of the metric projection  yield
  \begin{equation}\label{pr:thm:main:ii:dxk}
    d(x^k, \operatorname{Fix} U) \leq \|x^k - P_{\operatorname{Fix} U}(T(x^k))\| \leq \|x^k - T(x^k)\| + d(T(x^k), \operatorname{Fix} U).
  \end{equation}
  In particular, we obtain
  \begin{equation}\label{}
    \max \{d(x^k, \operatorname{Fix} T),\ d(x^k, \operatorname{Fix} U)\} \to 0
  \end{equation}
  as $k \to \infty$. Using the regularity assumption of the family $\{\operatorname{Fix} T, \operatorname{Fix} U\}$ over $B(z, r)$, we arrive at
  \begin{equation}\label{}
    d(x^k, \operatorname{Fix} T \cap \operatorname{Fix} U) \to 0
  \end{equation}
  as $k \to \infty$. This shows that $UT$ is regular over $B(z, r)$ and completes the proof of part (ii).

  \bigskip
  (iii) Let $x \in B(z, r)$ be such that $x \notin \operatorname{Fix} T \cap \operatorname{Fix} U$. By \eqref{pr:thm:main:toShow1}, we again have $T(x) \in B(z, R)$. Applying the linear regularity of $T$ and $U$ to $x$ and $T(x)$, respectively, we get
  \begin{equation}\label{}
    \|T(x) - x\| \geq \delta_1 d(x, \operatorname{Fix} T) \quad \text{and} \quad
    \|U(T(x)) - T(x)\| \geq \delta_2 d(T(x), \operatorname{Fix} U).
  \end{equation}
  Analogously to \eqref{pr:thm:main:ii:dxk}, we have
  \begin{equation}\label{pr:thm:main:iii:dx}
    d(x, \operatorname{Fix} U) \leq \|x - T(x)\| + d(T(x), \operatorname{Fix} U).
  \end{equation}
  Consequently,
  \begin{align}\label{}
    \max \{\|T(x) - x\|,\  \|UT(x) - T(x)\|\}
    & \textstyle\geq \frac 1 2 (\|T(x) - x\|+ \|UT(x) - T(x)\|)  \nonumber\\
    & \textstyle \geq \frac 1 2 \delta_2 (\|T(x) - x\| + d(T(x), \operatorname{Fix} U)) \nonumber\\
    & \textstyle \geq \frac 1 2 \delta_2 d(x, \operatorname{Fix} U)).
  \end{align}
  Using the linear regularity of the family $\{\operatorname{Fix} T, \operatorname{Fix} U\}$ over $B(z, r)$, we  obtain
  \begin{align}\label{}
    \max \{\|T(x) - x\|,\ & \|UT(x) - T(x)\|\} \nonumber \\
    & \textstyle \geq \frac {\min\{\delta_1, \delta_2\}} {2}
    \max\{ d(x, \operatorname{Fix} T)), d(x, \operatorname{Fix} U))\} \nonumber \\
    & \textstyle \geq \frac {\min\{\delta_1, \delta_2\}} {2 \kappa}
     d(x, \operatorname{Fix} T \cap \operatorname{Fix} U)).
  \end{align}
  By combining this with \eqref{lem:LB2:ineq} of Lemma \ref{lem:LB2}, we arrive at
  \begin{equation}\label{}
    \|UT(x) - x\|
    \geq \left(\frac{|\alpha| \min\{\delta_1, \delta_2\}}{2 \kappa(1+\beta \sqrt{\nu})}\right)^2
    d(x, \operatorname{Fix}T \cap \operatorname{Fix} U).
  \end{equation}
  This shows that $UT$ is $\delta$-linearly regular over $B(z,r)$, which completes the proof
\end{proof}

\begin{remark} \rm
    While estimating the radius $r$ for the regularity of the operators $T$ and $U$ can generally be quite challenging, there are situations in which this regularity holds on $B(z, r)$ for all $r > 0$. This corresponds to the case where the operators $T$ and $U$ are boundedly (weakly/linearly) regular. Some examples of this situation are discussed in Section~\ref{sec:regular_operators}. An analogous statement can be made for a boundedly (linearly) regular pair of sets $\{\operatorname{Fix} T, \operatorname{Fix} U\}$, examples of which are provided in Section~\ref{sec:regular_sets}.

\end{remark}

\begin{remark}[Relaxations Below Two] \label{rem:CRZ18}\rm
  \begin{enumerate}
    \item[(a)] A result analogous to Lemma \ref{lem:LB2} can be found in \cite[Proposition 4.6]{CZ14}. Note, however, that \cite[Proposition 4.6]{CZ14} requires that $\lambda, \mu \in (0,2)$. In such a case, the argument for obtaining \eqref{lem:LB2:ineq} can be made much simpler and the constant on the right-hand side of \eqref{lem:LB2:ineq} becomes $\frac{2-\max \{\lambda ,\mu \}}{\max \{\lambda ,\mu \}}$. We also emphasize that the proofs from \cite{CZ14} do not extend to the case  where  $\max\{\lambda, \mu\} \geq 2$.

    \item[(b)] A result analogous to Theorem \ref{thm:main} can be found in \cite[Theorem 5.4 and Corollary 5.6]{CRZ18},  where the result was expressed in the language of strongly quasi-nonexpansive operators.  In particular, when $\lambda, \mu \in (0,2)$, then $R = r$ and the linear regularity over $B(z,r)$ holds in (iii) with parameter
        \begin{equation}\label{}
          \delta := \frac{2-\max \{\lambda ,\mu \}}{\max \{\lambda ,\mu \}}
          \cdot \left(\frac{\min \{\delta_1,\delta_2\}}{2 \kappa}\right)^2.
        \end{equation}
        Note, however, that the proofs from \cite{CRZ18} rely on \cite[Proposition 4.6]{CZ14}. The proof of Theorem \ref{thm:main} demonstrates the significance of Lemma \ref{lem:LB2} when $ \max \{\lambda, \mu \} \geq 2 $.

    \item[(c)] Other results analogous to Theorem \ref{thm:main} focusing on the preservation of various types of regularity under product and convex combination, where $\lambda, \mu \in (0,2)$, can be found, for example, in \cite{BRZ18, BKRZ19, Ceg15, CRZ20, CZ14, RZ16}. See also \cite{CET21}.

  \end{enumerate}
\end{remark}

\begin{corollary}[Regularity for Projections] \label{cor:main3}
Let $A$ and $B$ be closed and convex subsets of $\mathcal H$. Assume that $\lambda, \mu >0$, $\lambda \mu < 4$ and that $A \cap B \neq \emptyset$. Let $z \in A \cap B$ and let $r > 0$. The following statements hold:

\begin{enumerate}
\item[$\mathrm{(i)}$] The operator $(P_B)_\mu (P_A)_\lambda$ is weakly regular over $B(z,r)$.

\item[$\mathrm{(ii)}$] If $\{A, B\}$ is regular over $B(z, r)$, then the operator $(P_B)_\mu (P_A)_\lambda$ is regular over $B(z,r)$.

\item[$\mathrm{(iii)}$] If $\{A, B\}$ is $\kappa $-linearly regular over $B(z,r)$, then the operator $(P_B)_\mu (P_A)_\lambda$ is $\delta$-linearly regular over $B(z,r)$, where

    \begin{equation} \label{cor:main3:delta} \displaystyle
      \delta := \left(\frac{|\alpha|  \min\{\lambda, \mu\}} {2 \kappa (1+\beta \sqrt{\nu }) } \right)^2,
    \end{equation}
$\nu$ is given by \eqref{lem:ni:eq1}, $\alpha$ and $\beta$ are given by \eqref{lem:LB2:ab}.
\end{enumerate}
\end{corollary}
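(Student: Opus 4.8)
The statement is a corollary of Theorem \ref{thm:main}, so the plan is to verify that the metric projections $P_A$ and $P_B$ satisfy the hypotheses of that theorem and then to read off the three conclusions. The key observations are standard: by the Metric Projection example, $P_A$ and $P_B$ are cutters with $\operatorname{Fix} P_A = A$ and $\operatorname{Fix} P_B = B$. Hence $T := (P_A)_\lambda$ is a $\lambda$-relaxed cutter and $U := (P_B)_\mu$ is a $\mu$-relaxed cutter, with $\operatorname{Fix} T = A$, $\operatorname{Fix} U = B$, and $\operatorname{Fix} T \cap \operatorname{Fix} U = A \cap B \neq \emptyset$. The condition $\lambda \mu < 4$ is assumed, so Theorem \ref{thm:main} applies verbatim to this pair once we supply the regularity of the individual operators.

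\textbf{Main step: regularity of the projection-relaxations.} The crux is that metric projections automatically enjoy the strongest form of each regularity, so the hypotheses of Theorem \ref{thm:main} on $T$ and $U$ are free of charge. For weak regularity, $I - P_A$ is demiclosed at $0$ (the projection satisfies the demiclosedness principle, being firmly nonexpansive), so $P_A$ is weakly regular over all of $\mathcal H$, and likewise for $P_B$; this disposes of (i). For (ii) and (iii), the decisive fact is that a metric projection is \emph{$1$-linearly regular} over $\mathcal H$: for every $x$ one has $\|P_A(x) - x\| = d(x, A) = d(x, \operatorname{Fix} P_A)$, so \eqref{e-LR} holds with modulus $\delta = 1$. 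Linear regularity with modulus $1$ implies ordinary regularity, so $P_A$ and $P_B$ are regular over every ball, giving the operator-level hypotheses of (ii), and they are $\delta_1 = \delta_2 = 1$ linearly regular, giving those of (iii). The pair-of-sets hypotheses in (ii) and (iii) are exactly the assumed (linear) regularity of $\{A, B\}$, and since $\operatorname{Fix} T = A$ and $\operatorname{Fix} U = B$, the family $\{\operatorname{Fix} T, \operatorname{Fix} U\}$ coincides with $\{A, B\}$.

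\textbf{Assembling the conclusions.} With $T = (P_A)_\lambda$ and $U = (P_B)_\mu$, part (i) of Theorem \ref{thm:main} immediately yields weak regularity of $UT = (P_B)_\mu (P_A)_\lambda$ over $B(z, r)$, proving (i) here. For (ii), we invoke Theorem \ref{thm:main}(ii): $T$ is regular over $B(z, r)$, $U$ is regular over $B(z, R)$ (both hold over all of $\mathcal H$, hence over any ball, regardless of $R$), and $\{A, B\} = \{\operatorname{Fix} T, \operatorname{Fix} U\}$ is regular over $B(z, r)$ by hypothesis, so $UT$ is regular over $B(z, r)$. For (iii), we apply Theorem \ref{thm:main}(iii) with $\delta_1 = \delta_2 = 1$ and the given $\kappa$; substituting $\min\{\delta_1, \delta_2\} = 1$ into \eqref{thm:main:delta} gives a modulus that still carries the factor $\min\{\lambda, \mu\}$ claimed in \eqref{cor:main3:delta}.

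\textbf{The one point requiring care.} The only genuine subtlety is reconciling the modulus: formula \eqref{thm:main:delta} with $\delta_1 = \delta_2 = 1$ produces $\left(\frac{|\alpha|}{2\kappa(1 + \beta\sqrt{\nu})}\right)^2$, yet \eqref{cor:main3:delta} contains an extra factor $\min\{\lambda, \mu\}$. This means one should not simply quote $\delta_1 = \delta_2 = 1$; rather, the relevant linear-regularity moduli to feed into Theorem \ref{thm:main}(iii) are those of the \emph{relaxed} operators $T = (P_A)_\lambda$ and $U = (P_B)_\mu$, not of the bare projections. Since $\|(P_A)_\lambda(x) - x\| = \lambda \|P_A(x) - x\| = \lambda\, d(x, A)$, the relaxation $(P_A)_\lambda$ is $\lambda$-linearly regular (and $(P_B)_\mu$ is $\mu$-linearly regular), but Theorem \ref{thm:main}(iii) requires $\delta_1, \delta_2 \in (0, 1]$, so one must instead apply it to the cutters $P_A, P_B$ themselves and track the relaxation factors separately, or re-run the estimate in the proof of Theorem \ref{thm:main}(iii) with the projection moduli. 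Working through that bookkeeping is where the factor $\min\{\lambda, \mu\}$ enters, and getting it placed correctly is the main thing to check carefully; the rest is direct specialization.
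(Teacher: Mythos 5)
Your overall strategy---specializing Theorem \ref{thm:main} to $T=(P_A)_\lambda$ and $U=(P_B)_\mu$, using that metric projections are cutters with $\operatorname{Fix}P_A=A$ and $\operatorname{Fix}P_B=B$, that they are weakly regular and $1$-linearly regular on all of $\mathcal H$ (so that the relaxations are $\lambda$- and $\mu$-linearly regular, since $\|(P_A)_\lambda(x)-x\|=\lambda\, d(x,A)$), and that $\{\operatorname{Fix}T,\operatorname{Fix}U\}=\{A,B\}$---is exactly the intended derivation, and parts (i) and (ii) are complete as you wrote them.

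Part (iii), however, is left unfinished at precisely the nontrivial point. You correctly observe that plugging $\delta_1=\delta_2=1$ into \eqref{thm:main:delta} loses the factor $\min\{\lambda,\mu\}$, and that the natural candidates $\delta_1=\lambda$, $\delta_2=\mu$ are disallowed by the hypothesis $\delta_1,\delta_2\in(0,1]$ of Theorem \ref{thm:main}(iii); but you then defer the resolution (``working through that bookkeeping is where the factor enters\dots the main thing to check carefully'') without carrying it out. That restriction is not cosmetic: the proof of Theorem \ref{thm:main}(iii) uses $\delta_2\le 1$ in the step $\frac12\bigl(\|T(x)-x\|+\delta_2\, d(T(x),\operatorname{Fix}U)\bigr)\ge\frac{\delta_2}{2}\bigl(\|T(x)-x\|+d(T(x),\operatorname{Fix}U)\bigr)$. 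If you re-run that argument with the exact identities $\|T(x)-x\|=\lambda\, d(x,A)$ and $\|UT(x)-T(x)\|=\mu\, d(T(x),B)$ together with $d(x,B)\le\|T(x)-x\|+d(T(x),B)$, you obtain $\max\{\|T(x)-x\|,\|UT(x)-T(x)\|\}\ge\frac{\min\{\lambda,\mu,1\}}{2}\max\{d(x,A),d(x,B)\}$, i.e.\ the factor $\min\{\lambda,\mu,1\}$ rather than $\min\{\lambda,\mu\}$. When $\min\{\lambda,\mu\}\le 1$ these coincide and your argument closes; but the hypotheses allow $\min\{\lambda,\mu\}>1$ (e.g.\ $\lambda=\mu=1.9$), and in that regime the constant in \eqref{cor:main3:delta} does not follow from this route---one must either produce a sharper estimate exploiting the projection structure or weaken the constant to $\min\{\lambda,\mu,1\}$. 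As written, your proof asserts the stated modulus without establishing it.
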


\begin{remark}[Generalized Douglas-Rachford Operator] \label{rem:GDRO} \rm
  Let $A$ and $B$ be closed and convex subsets of $\mathcal{H}$ with $A \cap B \neq \emptyset$, and let $\lambda, \mu > 0$. Consider the following operator:
  \begin{equation}\label{}
    V := (1 - \bar \alpha) I + \bar \alpha \big( (P_B)_\mu (P_A)_\lambda - I\big),
  \end{equation}
  where $\bar\alpha \in (0, \infty)$; see, for example, \cite{DP18}. The operator $V$ reduces to the well-known Douglas-Rachford operator when $\lambda = \mu = 2$ and $\bar \alpha = \frac{1}{2}$. Following \cite{DP18}, we call $V$ the generalized Douglas-Rachford operator.
  \begin{enumerate}
    \item[(a)] Assume that $\lambda \mu < 4$. It is not difficult to prove that the (weak/linear) regularity of $V$ over $B(z, r)$ is equivalent to the (weak/linear) regularity of $(P_B)_\mu (P_A)_\lambda$, respectively. Consequently, with minor adjustments, we can reformulate Corollary~\ref{cor:main3} for the operator $V$. In particular, Corollary~\ref{cor:main3}(iii) recovers part of \cite[Proposition 4.9(i)]{DP18} and \cite[Proposition 4.14]{DP18}, where $\lambda, \mu \in (0,2]$ and $\min \{\lambda, \mu\} < 2$. In these results, the pair of sets $\{A, B\}$ is transversal (see Example~\ref{ex:RegSets}(v)) and boundedly linearly regular, respectively.

    \item[(b)] When $\lambda = \mu = 2$, both operators correspond to reflections, and Corollary~\ref{cor:main3} no longer applies to $V$. However, an independent argument presented in \cite[Theorem 4.4]{BNP15} shows that when $\mathcal{H}$ is finite-dimensional and the family of sets $\{A, B\}$ is transversal, then the Douglas-Rachford operator $V$ is boundedly linearly regular.
  \end{enumerate}
\end{remark}

\section[An Application]{An  Application\label{sec:application}}

In this section we apply the results of Section \ref{sec:main} to the convergence
properties  of method  \eqref{int:xk}, where we make the range for $\alpha_k$ more precise. We note that a result analogous to (i) was shown in \cite[Theorem 4.5]{Ceg23}.  We include part (i) for consistency. To the best of our knowledge, parts (ii) and (iii) are new when $\max \{\lambda, \mu\} > 2$.

\begin{theorem}[Convergence] \label{thm:main2}
Let $T \colon \mathcal H \to \mathcal H$ and $U \colon \mathcal H \to \mathcal H$ be $\lambda$- and $\mu$-relaxed cutters, respectively, where $\lambda, \mu >0$. Assume that $\lambda \mu < 4$ and $\operatorname{Fix}T\cap \operatorname{Fix}U\neq \emptyset$. Consider the following iterative method:
\begin{equation}\label{thm:main2:xk}
  x^0 \in \mathcal H, \quad x^{k+1} := x^k + \frac{\alpha_k}{\nu} \big(UT(x^k) - x^k \big), \quad k = 0,1,2,\ldots,
\end{equation}
where $\alpha_k \in [\varepsilon, 2 - \varepsilon]$, $\varepsilon > 0$, and where $\nu$ is given by \eqref{lem:ni:eq1}. Moreover, let $z\in \operatorname{Fix}T\cap \operatorname{Fix}U$, let $r := \|x^0 - z\| >0$ and put $R :=\max \{r,(\lambda -1)r\}$. The following statements hold:

\begin{enumerate}
\item[$\mathrm{(i)}$] If $T$ is weakly regular over $B(z,r )$ and $U$ is weakly regular over $B(z,R)$, then the sequence $\{x^k\}_{k=0}^\infty$ converges weakly to some $x^* \in \operatorname{Fix}T\cap \operatorname{Fix}U$.

\item[$\mathrm{(ii)}$] If $T$ is regular over $B(z, r)$, $U$ is regular over $B(z,R)$ and the family $\{\operatorname{Fix}T,\operatorname{Fix}U\}$ is regular over $B(z,r)$, then the sequence $\{x^k\}_{k=0}^\infty$ converges in norm to some $x^* \in \operatorname{Fix}T\cap \operatorname{Fix}U$.

\item[$\mathrm{(iii)}$] If $T$ is $\delta _{1}$-linearly regular over $B(z,r)$, $U$ is $\delta _{2}$-linearly regular over $B(z,R)$ and the family $\{\operatorname{Fix}T,\operatorname{Fix}U\}$ is $\kappa $-linearly regular over $B(z,r)$, where $\delta _{1},\delta _{2} \in (0,1]$ and $\kappa >0$, then the sequence $\{x^k\}_{k=0}^\infty$ converges $Q$-linearly to some $x^* \in \operatorname{Fix}T\cap \operatorname{Fix}U$ at the rate $\sqrt{1 - \left(\frac{\varepsilon \delta}{2\nu}\right)^2}$, where $\delta$ is given by \eqref{thm:main:delta}, that is,
    \begin{equation}\label{thm:main2:rate}
      \|x^{k+1} -  x^*\| \leq \sqrt{1 - \left(\frac{\varepsilon \delta}{2\nu}\right)^2} \ \|x^k -  x^*\|, \quad k = 0,1,2,\ldots.
    \end{equation}
\end{enumerate}
\end{theorem}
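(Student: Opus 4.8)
The strategy is to establish that the sequence $\{x^k\}$ is Fej\'er monotone with respect to $\operatorname{Fix}T\cap\operatorname{Fix}U$, and then to invoke the regularity hypotheses to upgrade Fej\'er monotonicity into weak, norm, and linear convergence, respectively, using Lemma~\ref{lem:FM} and Theorem~\ref{thm:main}. The key observation is that the iteration \eqref{thm:main2:xk} is a relaxation of the $\nu$-relaxed cutter $UT$: since $(UT)_{1/\nu}$ is a cutter by Theorem~\ref{t-comp}, we may rewrite \eqref{thm:main2:xk} as $x^{k+1} = x^k + \alpha_k\big((UT)_{1/\nu}(x^k) - x^k\big)$, which is a standard relaxed-cutter iteration with relaxation parameter $\alpha_k \in [\varepsilon, 2-\varepsilon] \subseteq (0,2)$. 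This reframing is what makes all the classical machinery available.

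\textbf{Step 1 (Fej\'er monotonicity and basic estimates).} First I would show that for every $z \in \operatorname{Fix}T\cap\operatorname{Fix}U$ and every $k$,
\begin{equation}\label{pr:main2:fejer}
  \|x^{k+1} - z\|^2 \leq \|x^k - z\|^2 - \frac{\alpha_k(2-\alpha_k)}{\nu^2}\|UT(x^k)-x^k\|^2.
\end{equation}
This follows from the cutter inequality for $(UT)_{1/\nu}$ applied via Proposition~\ref{prop:condA}, expanding $\|x^{k+1}-z\|^2 = \|x^k - z\|^2 + 2\frac{\alpha_k}{\nu}\langle x^k - z, UT(x^k)-x^k\rangle + \frac{\alpha_k^2}{\nu^2}\|UT(x^k)-x^k\|^2$ and using $\langle z - x^k, UT(x^k)-x^k\rangle \geq \frac{1}{\nu}\|UT(x^k)-x^k\|^2$. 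Since $\alpha_k(2-\alpha_k) \geq \varepsilon(2-\varepsilon) > 0$, inequality \eqref{pr:main2:fejer} shows that $\{x^k\}$ is Fej\'er monotone with respect to $\operatorname{Fix}T\cap\operatorname{Fix}U$, so $\{x^k\} \subseteq B(z,r)$ with $r = \|x^0-z\|$, the sequence $\{\|x^k-z\|\}$ is nonincreasing, and summing \eqref{pr:main2:fejer} gives $\|UT(x^k)-x^k\| \to 0$.

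\textbf{Step 2 (weak and norm convergence).} For part (i), since $\{x^k\} \subseteq B(z,r)$ and $\|UT(x^k)-x^k\|\to 0$, any weak cluster point $y$ satisfies $y \in \operatorname{Fix}UT = \operatorname{Fix}T\cap\operatorname{Fix}U$ by the weak regularity of $UT$ over $B(z,r)$ established in Theorem~\ref{thm:main}(i); Lemma~\ref{lem:FM}(i) then yields weak convergence to a point of $\operatorname{Fix}T\cap\operatorname{Fix}U$. For part (ii), the regularity of $UT$ over $B(z,r)$ from Theorem~\ref{thm:main}(ii) gives $d(x^k, \operatorname{Fix}T\cap\operatorname{Fix}U) \to 0$, and Lemma~\ref{lem:FM}(ii) upgrades this to norm convergence.

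\textbf{Step 3 (linear convergence).} For part (iii), let $F := \operatorname{Fix}T\cap\operatorname{Fix}U$ and $x^* = \lim x^k$, which exists in norm by part (ii). The $\delta$-linear regularity of $UT$ over $B(z,r)$ from Theorem~\ref{thm:main}(iii) gives $\|UT(x^k)-x^k\| \geq \delta\, d(x^k, F)$. Substituting into \eqref{pr:main2:fejer} and bounding $\alpha_k(2-\alpha_k) \geq \varepsilon^2$ (using $2-\alpha_k \geq \varepsilon$ and $\alpha_k \geq \varepsilon$... more carefully, $\alpha_k(2-\alpha_k)\geq \varepsilon(2-\varepsilon)\geq\varepsilon^2$), I obtain
\begin{equation}\label{pr:main2:lin}
  \|x^{k+1}-z\|^2 \leq \|x^k - z\|^2 - \left(\frac{\varepsilon\delta}{\nu}\right)^2 d(x^k,F)^2
\end{equation}
for all $z \in F$. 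The main obstacle is converting this distance-based contraction into the clean $Q$-linear bound \eqref{thm:main2:rate} with respect to $x^*$. The trick is to apply \eqref{pr:main2:lin} with $z = P_F(x^k)$, so that $\|x^k-z\| = d(x^k,F)$, yielding $d(x^{k+1},F)^2 \leq \|x^{k+1}-P_F(x^k)\|^2 \leq \big(1 - (\varepsilon\delta/\nu)^2\big)d(x^k,F)^2$; thus $\{d(x^k,F)\}$ contracts $Q$-linearly at rate $\sqrt{1-(\varepsilon\delta/\nu)^2}$. Finally, Lemma~\ref{lem:FM}(iii) gives $\|x^k - x^*\| \leq 2\,d(x^k,F)$, and combined with $d(x^{k+1},F) \leq \|x^{k+1}-x^*\|$ this transfers the linear rate to $\|x^k-x^*\|$; the factor of $2$ from Lemma~\ref{lem:FM}(iii) is precisely what produces the $2\nu$ denominator in the stated rate $\sqrt{1-(\varepsilon\delta/(2\nu))^2}$.
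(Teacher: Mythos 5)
Your overall strategy coincides with the paper's: establish the Fej\'er inequality
\begin{equation*}
  \|x^{k+1}-z\|^2 \le \|x^k-z\|^2-\tfrac{\alpha_k(2-\alpha_k)}{\nu^2}\|UT(x^k)-x^k\|^2
\end{equation*}
via the cutter property of $(UT)_{1/\nu}$ (the paper phrases this through the demicontractivity of $I+\alpha_k(V-I)$ with $V=(UT)_{1/\nu}$, but it is the same computation), then feed $\|UT(x^k)-x^k\|\to 0$ into Theorem~\ref{thm:main} and Lemma~\ref{lem:FM}. Steps 1 and 2 are correct and essentially identical to the paper's proof of (i) and (ii).

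There is, however, a genuine flaw in the final combination of Step 3. After obtaining $d(x^{k+1},F)\le q\,d(x^k,F)$ with $q=\sqrt{1-(\varepsilon\delta/\nu)^2}$, you propose to transfer this to $\|x^k-x^*\|$ via $\|x^{k+1}-x^*\|\le 2d(x^{k+1},F)$ and $d(x^k,F)\le\|x^k-x^*\|$. Chaining these gives only $\|x^{k+1}-x^*\|\le 2q\,\|x^k-x^*\|$, and since $\delta$ is typically small, $2q>1$; this is not the claimed per-step bound \eqref{thm:main2:rate}. At best this route yields $\|x^k-x^*\|\le 2q^k d(x^0,F)$, i.e.\ $R$-linear convergence, not the $Q$-linear estimate asserted in the theorem, and the factor $2$ does not ``produce'' the $2\nu$ in the way you suggest. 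The repair is immediate with the ingredients you already have, and is what the paper does: apply your inequality \eqref{pr:main2:lin} with $z=x^*\in F$ (legitimate, since $x^*$ exists by part (ii)) and then bound $d(x^k,F)\ge\frac12\|x^k-x^*\|$ from Lemma~\ref{lem:FM}(iii), obtaining
\begin{equation*}
  \|x^{k+1}-x^*\|^2 \le \|x^k-x^*\|^2-\left(\tfrac{\varepsilon\delta}{\nu}\right)^2 d(x^k,F)^2
  \le \left(1-\left(\tfrac{\varepsilon\delta}{2\nu}\right)^2\right)\|x^k-x^*\|^2,
\end{equation*}
which is exactly \eqref{thm:main2:rate}. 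The detour through $z=P_F(x^k)$ is unnecessary and is where the argument goes astray.
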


\begin{proof}
  Let $V := I + \frac{1}{\nu}(UT - I)$ and put $V_k := I + \alpha_k(V - I)$.  By Theorem \ref{t-comp}, the operator $V$ is a cutter with $\operatorname{Fix} V = \operatorname{Fix}T\cap \operatorname{Fix}U$. Thus, Proposition \ref{prop:demicontractive} yields that $V_k$ is $\rho_k$-demicontractive with the same set of fixed points, where $\rho_k := \frac{\alpha_k-2}{\alpha_k} < 0$. Noting that $x_{k+1} = V_k(x^k)$ and $x^{k+1} - x^k = \frac{\alpha_k}{\nu} (UT(x^k) - x^k)$, for each $w \in \operatorname{Fix}T\cap \operatorname{Fix}U$, we get
  \begin{equation}\label{pr:thm:main2:DC}
    \|x^k - w\|^2 - \|x^{k+1} - w\|^2
    \geq \frac{2-\alpha_k}{\alpha_k}\|x^{k+1} - x^k\|^2
    \geq \left(\frac{\varepsilon}{\nu}\right)^2 \|UT(x^k) - x^k\|^2.
  \end{equation}
  Consequently, $\{x^k\}_{k=0}^\infty $ is Fej\'{e}r monotone with respect to $\operatorname{Fix}T\cap \operatorname{Fix}U$ and
  \begin{equation}\label{pr:thm:main2:eq1}
    \|UT(x^k) - x^k\| \to 0
  \end{equation}
  as $k \to \infty$. Moreover, for $w = z$, we get $\{x^k\}_{k=0}^\infty \subset B(z, r)$.

  \bigskip
  (i) Using  Theorem \ref{thm:main} (i),  we see that  the operator $UT$ is weakly regular over $B(z,r)$. This and \eqref{pr:thm:main2:eq1} imply that every weak cluster point of $\{x^k\}_{k=0}^\infty$ lies in $\operatorname{Fix}T\cap \operatorname{Fix}U$. Applying Lemma \ref{lem:FM} (i),  we complete  the proof of this part.

  \bigskip
  (ii) Theorem \ref{thm:main} (ii) yields that the operator $UT$ is regular over $B(z,r)$. Thanks to  \eqref{pr:thm:main2:eq1}, we see that
  \begin{equation}\label{pr:thm:main2:BR}
    d(x^k, \operatorname{Fix}T\cap \operatorname{Fix}U) \to 0
  \end{equation}
  as $k \to \infty$. It now suffices to apply Lemma \ref{lem:FM} (ii).

  \bigskip
  (iii) Thanks to Theorem \ref{thm:main} (iii), the product $UT$ is $\delta$-linearly regular over $B(z,r)$, where $\delta >0$ is defined in \eqref{thm:main:delta}. In particular,
  \begin{equation}\label{pr:thm:main2:LR}
    \|UT(x^k) - x^k\| \geq \delta d(x^k, \operatorname{Fix}T\cap \operatorname{Fix}U).
  \end{equation}
  On the other hand, linear regularity implies regularity over $B(z,r)$. Thus, thanks to (ii), we know that $x^k \to x^*$ for some $x^* \in \operatorname{Fix}T\cap \operatorname{Fix}U$. Applying Lemma \ref{lem:FM} (iii), we get
  \begin{equation}\label{pr:thm:main2:LB} \textstyle
    d(x^k, \operatorname{Fix}T\cap \operatorname{Fix}U) \geq \frac 1 2 \|x^k - x^*\|
  \end{equation}
  for $k = 0, 1,2, \ldots$. By combining \eqref{pr:thm:main2:DC} (with $w = x^*$), \eqref{pr:thm:main2:LR} and \eqref{pr:thm:main2:LB}, we arrive at
  \begin{equation}\label{}
    \|x^{k+1} - x^*\| \leq \sqrt{1 - \left(\frac{\varepsilon \delta}{2\nu}\right)^2} \|x^k - x^*\|.
  \end{equation}
  In particular, $\frac{\varepsilon \delta}{2\nu} < 1$.  This completes the proof.
\end{proof}

\begin{remark}[Reformulation of \eqref{thm:main2:xk}] \rm \label{rem:refForXk}
Assume that $\lambda, \mu > 0$ and $\lambda \mu < 4$. It is sometimes more convenient to write method \eqref{thm:main2:xk} as
\begin{equation}\label{rem:refForXk:xk}
  x^0 \in \mathcal{H}, \quad x^{k+1} := x^k + \bar{\alpha}_k \big( UT(x^k) - x^k \big), \quad k = 0,1,2,\ldots,
\end{equation}
where
\begin{equation}\label{rem:refForXk:alphak}
  \bar{\alpha}_k \in [\bar{\varepsilon}, 1 + \rho - \bar{\varepsilon}], \quad \bar{\varepsilon} > 0, \quad \text{and} \quad
  \rho := \frac{2-\nu}{\nu}.
\end{equation}
Note that methods \eqref{thm:main2:xk} and \eqref{rem:refForXk:xk} are equivalent (we demonstrate this below). Moreover, since $\lambda \mu < 4$, we obtain
\begin{equation}\label{rem:refForXk:rho}
  -1 < \rho =
  \begin{cases}
    \left( \frac{\lambda}{2-\lambda} + \frac{\mu}{2- \mu}\right)^{-1}, & \text{if } \lambda \neq 2 \text{ and } \mu \neq 2,\\
    0, & \text{if } \lambda = 2 \text{ or } \mu = 2,
  \end{cases}
\end{equation}
see \eqref{lem:ni:eq1}. An upper bound for $\bar{\alpha}_k$ analogous to \eqref{rem:refForXk:alphak}, with $\rho$ given by \eqref{rem:refForXk:rho}, can be found, for example, in \cite[Theorem 2.14 and Corollary 5.12]{DP18}.
\end{remark}

\begin{proof}
  We show the equivalence of methods \eqref{thm:main2:xk} and \eqref{rem:refForXk:xk}. Indeed, we write
  \begin{equation}
    2 - \varepsilon = \nu + 2 - \nu - \varepsilon = \left( 1 + \frac{2-\nu}{\nu} - \frac{\varepsilon}{\nu} \right) \nu = (1 + \rho - \frac{\varepsilon}{\nu}) \nu.
  \end{equation}
  If $\{x^k\}_{k=0}^\infty$ is defined by \eqref{thm:main2:xk} with $\alpha_k \leq 2-\varepsilon$, then $\bar{\alpha}_k := \frac{\alpha_k}{\nu} \leq 1 + \rho - \bar{\varepsilon}$, where $\bar{\varepsilon} = \frac{\varepsilon}{\nu}$. On the other hand, if $\{x^k\}_{k=0}^\infty$ is defined by \eqref{rem:refForXk:xk} with $\bar{\alpha}_k \leq 1 + \rho - \bar{\varepsilon}$, then $\alpha_k := \bar{\alpha}_k \nu \leq (2 - \varepsilon) \nu$, where $\varepsilon = \bar{\varepsilon} \nu$.
\end{proof}

\begin{corollary}[Convergence for Projections] \label{cor:main4}
Let $A$ and $B$ be closed and convex subsets of $\mathcal H$. Assume that $\lambda, \mu >0$, $\lambda \mu < 4$ and that $A \cap B \neq \emptyset$. Consider the following iterative method:
\begin{equation}\label{cor:main4:xk}
  x^0 \in \mathcal H, \quad x^{k+1} := x^k + \frac{\alpha_k}{\nu} \Big((P_B)_\mu (P_A)_\lambda(x^k) - x^k \Big), \quad k = 0,1,2,\ldots,
\end{equation}
where $\alpha_k \in [\varepsilon, 2 - \varepsilon]$, $\varepsilon > 0$, and where $\nu$ is given by \eqref{lem:ni:eq1}. Moreover, let $z\in A \cap B$ and let $r := \|x_0 - z\| >0$. The following statements hold:

\begin{enumerate}
\item[$\mathrm{(i)}$] The sequence $\{x^k\}_{k=0}^\infty$ converges weakly to some $x^* \in A \cap B$.

\item[$\mathrm{(ii)}$] If $\{A, B\}$ is regular over $B(z, r)$, then the sequence $\{x^k\}_{k=0}^\infty$ converges in norm to some $x^* \in A \cap B$.

\item[$\mathrm{(iii)}$] If $\{A, B\}$ is $\kappa $-linearly regular over $B(z,r)$, then the sequence $\{x^k\}_{k=0}^\infty$ converges $Q$-linearly to some $x^* \in A \cap B$ at the rate $\sqrt{1 - \left(\frac{\varepsilon \delta}{2\nu}\right)^2}$, where this time $\delta$ is given by \eqref{cor:main3:delta}.
\end{enumerate}
\end{corollary}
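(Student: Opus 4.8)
The plan is to recognize that Corollary~\ref{cor:main4} is nothing more than the specialization of Theorem~\ref{thm:main2} to the case where the two relaxed cutters are relaxed metric projections. First I would set $T := (P_A)_\lambda$ and $U := (P_B)_\mu$. Since $P_A$ and $P_B$ are cutters (they are firmly nonexpansive with $\operatorname{Fix} P_A = A$ and $\operatorname{Fix} P_B = B$, as recalled in the Metric Projection example), the operators $T$ and $U$ are genuine $\lambda$- and $\mu$-relaxed cutters with $\operatorname{Fix} T = A$ and $\operatorname{Fix} U = B$. The standing hypotheses $\lambda \mu < 4$ and $A \cap B \neq \emptyset$ then translate exactly into the hypotheses of Theorem~\ref{thm:main2}, and the iteration \eqref{cor:main4:xk} becomes \eqref{thm:main2:xk}. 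Thus the entire corollary will follow by verifying the three regularity premises of Theorem~\ref{thm:main2} for this specific choice of $T$ and $U$.

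The key observation that makes each part go through is that metric projections are \emph{unconditionally} well-behaved with respect to the individual-operator regularities, so only the regularity of the pair $\{A,B\}$ ever needs to be assumed. Concretely, for part (i) I would invoke the fact that every metric projection onto a closed convex set is weakly regular (its associated $I - P_C$ is demiclosed at $0$, being firmly nonexpansive; see Remark~\ref{rem:DCP}), hence $T$ is weakly regular over $B(z,r)$ and $U$ is weakly regular over $B(z,R)$ with no extra hypothesis. This lets me apply Theorem~\ref{thm:main2}(i) directly to obtain weak convergence to some $x^* \in A \cap B = \operatorname{Fix} T \cap \operatorname{Fix} U$.

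For parts (ii) and (iii) the same strategy applies, but here I would lean on the relation $\operatorname{Fix} T = A$ and $\operatorname{Fix} U = B$, so that the regularity (resp.\ $\kappa$-linear regularity) of the pair $\{A,B\}$ over $B(z,r)$ is \emph{identical} to the regularity of the pair $\{\operatorname{Fix} T, \operatorname{Fix} U\}$ required by Theorem~\ref{thm:main2}. It remains to supply the individual regularity of $T$ and $U$: a metric projection $P_C$ is always boundedly regular and in fact satisfies $\|P_C(x) - x\| = d(x, C) = d(x, \operatorname{Fix} P_C)$, so it is $1$-linearly regular over all of $\mathcal H$; consequently $T$ and $U$ are (boundedly) regular and linearly regular over any ball, with moduli $\delta_1 = \delta_2 = 1$ one may take (more precisely, the relaxation $T = (P_A)_\lambda$ is linearly regular over $B(z,r)$ with modulus governed by $\min\{\lambda,\mu\}$ entering \eqref{cor:main3:delta}). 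Feeding $\delta_1 = \delta_2$ tied to $\min\{\lambda,\mu\}$ into the formula \eqref{thm:main:delta} of Theorem~\ref{thm:main2}(iii) reproduces exactly the constant $\delta$ in \eqref{cor:main3:delta}, and the $Q$-linear rate $\sqrt{1 - (\varepsilon\delta/2\nu)^2}$ is inherited verbatim.

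The only genuinely substantive point — and the one I would treat most carefully — is the bookkeeping for the linear-regularity modulus of the \emph{relaxed} projections $T = (P_A)_\lambda$ and $U = (P_B)_\mu$, as opposed to the bare projections $P_A, P_B$. Because $T - \operatorname{Id} = \lambda(P_A - \operatorname{Id})$, one has $\|T(x)-x\| = \lambda\, d(x, A) = \lambda\, d(x,\operatorname{Fix} T)$, which shows $T$ is linearly regular but with a $\lambda$-dependent modulus; the reader must check that when this modulus is substituted into \eqref{thm:main:delta} the $\lambda,\mu$-factors combine to give precisely $\min\{\lambda,\mu\}$ in the numerator of \eqref{cor:main3:delta}. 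Everything else is a direct citation of Theorem~\ref{thm:main2}, so the main obstacle is purely this constant-matching verification rather than any conceptual difficulty.
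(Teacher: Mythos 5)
Your overall strategy is exactly the intended one: the paper offers no separate proof of Corollary~\ref{cor:main4}, treating it as the specialization of Theorem~\ref{thm:main2} to $T:=(P_A)_\lambda$, $U:=(P_B)_\mu$, with $\operatorname{Fix}T=A$, $\operatorname{Fix}U=B$. Parts (i) and (ii) of your argument are complete: metric projections are weakly regular (demiclosedness for nonexpansive maps), the identity $\|(P_A)_\lambda(x)-x\|=\lambda\,d(x,A)=\lambda\,d(x,\operatorname{Fix}(P_A)_\lambda)$ gives regularity of the relaxed projections over any set, and the pair condition on $\{A,B\}$ is literally the pair condition on $\{\operatorname{Fix}T,\operatorname{Fix}U\}$.

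The gap is in part (iii), precisely at the step you defer. First, your claim that ``one may take $\delta_1=\delta_2=1$'' is false when $\lambda<1$ or $\mu<1$: since $\|(P_A)_\lambda(x)-x\|=\lambda\,d(x,A)$ \emph{with equality}, the operator $(P_A)_\lambda$ is $\lambda$-linearly regular and not $\delta$-linearly regular for any $\delta>\lambda$. Second, and more seriously, Theorem~\ref{thm:main2}(iii) (via Theorem~\ref{thm:main}(iii)) requires $\delta_1,\delta_2\in(0,1]$ — this restriction is actually used in the paper's proof of Theorem~\ref{thm:main}(iii) at the step $\|T(x)-x\|\geq\delta_2\|T(x)-x\|$ — so the only admissible moduli are $\delta_1=\min\{\lambda,1\}$ and $\delta_2=\min\{\mu,1\}$. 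Feeding these into \eqref{thm:main:delta} produces $\min\{\lambda,\mu,1\}$ in the numerator, which does \emph{not} reproduce the $\min\{\lambda,\mu\}$ of \eqref{cor:main3:delta} whenever $\min\{\lambda,\mu\}>1$ (a case permitted by $\lambda\mu<4$, e.g.\ $\lambda=\mu=3/2$). So the corollary is not obtained by a ``verbatim'' citation of Theorem~\ref{thm:main2}(iii): to get the constant \eqref{cor:main3:delta} one must re-run the estimate inside the proof of Theorem~\ref{thm:main}(iii) using the exact identities $\|T(x)-x\|=\lambda\,d(x,A)$ and $\|UT(x)-T(x)\|=\mu\,d(T(x),B)$ in place of the one-sided linear-regularity inequalities, and verify that the claimed factor $\min\{\lambda,\mu\}$ actually emerges. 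Your proposal names this verification as the crux but does not carry it out, and the naive substitution you describe would fail; qualitatively, $Q$-linear convergence at \emph{some} rate still follows from Theorem~\ref{thm:main2}(iii) with $\delta_1=\delta_2=\min\{\lambda,\mu,1\}$, but the specific rate asserted in Corollary~\ref{cor:main4}(iii) is not justified by your argument as written.
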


\begin{remark}[Generalized Douglas-Rachford Method] \label{rem:GRDM} \rm
  Let $A$ and $B$ be closed and convex subsets of $\mathcal{H}$ with $A \cap B \neq \emptyset$, and let $\lambda, \mu > 0$. Following \cite{DP18}, we consider the following iterative method:
  \begin{equation}\label{rem:GRDM:xk}
    x^0 \in \mathcal{H}, \quad x^{k+1} := x^k + \bar{\alpha} \Big( (P_B)_\mu (P_A)_\lambda(x^k) - x^k \Big), \quad k = 0,1,2,\ldots,
  \end{equation}
  where $ \bar{\alpha} \in (0, \infty)$.
  \begin{enumerate}
    \item[(a)] Assume that $\lambda \mu < 4$ and $\bar{\alpha} \in [\bar \varepsilon, 1 + \rho - \bar{\varepsilon}]$, where $\rho$ is given by \eqref{rem:refForXk:rho} and $\bar \varepsilon > 0$. Clearly, method \eqref{rem:GRDM:xk} is a special case of \eqref{cor:main4:xk}, as discussed in Remark \ref{rem:refForXk}. Consequently, we can apply Corollary \ref{cor:main4} to obtain (weak/linear) convergence. In particular, Corollary \ref{cor:main4}(i) recovers part of \cite[Theorem 2.14]{DP18}, where $\lambda, \mu \in (0,2]$ and $\min \{\lambda, \mu\} < 2$. Moreover, Corollary \ref{cor:main4}(iii) recovers \cite[Corollary 5.12(ii)(c)]{DP18}.

    \item[(b)] It should be noted that when $\lambda = \mu = 2$ (both operators are reflections), Corollary \ref{cor:main4} no longer applies to method \eqref{rem:GRDM:xk}. However, this case is considered in \cite[Theorem 2.14 and Corollary 5.12]{DP18}, and also in \cite[Theorem 8.5]{BNP15}.
  \end{enumerate}
\end{remark}

\subsection{Numerical Example}

In this section, we consider a very simple numerical example illustrating Corollary \ref{cor:main4}. We assume that $\mathcal{H} = \mathbb{R}^2$ so that we can visualize the trajectories of the considered methods. We also assume that $A$ and $B$ are two lines intersecting at the origin $(0,0)$, with the angle $\frac{\pi}{6}$. Thus, the unique solution of the linear feasibility problem is the point $(0,0)$. We examine the following three iterative methods:

\begin{itemize}
  \item The method of alternating projections (MAP), where
  \begin{equation}\label{numExp:MAP}
    x^0 \in \mathbb{R}^2, \quad x^{k+1} := P_B P_A(x^k), \quad k = 0,1,2,\ldots.
  \end{equation}

  \item The Douglas-Rachford method (DR), where
  \begin{equation}\label{numExp:DR}
    x^0 \in \mathbb{R}^2, \quad x^{k+1} := x^k + \frac{1}{2} \Big( (P_B)_2 (P_A)_2(x^k) - x^k \Big), \quad k = 0,1,2,\ldots.
  \end{equation}

  \item Method \eqref{cor:main4:xk} with $\lambda = 1$, $\mu = 3$ (so that $\nu = 4$), and $\alpha_k = 1$, that is,
  \begin{equation}\label{numExp:NEW}
    x^0 \in \mathbb{R}^2, \quad x^{k+1} := x^k + \frac{1}{4} \Big( P_B (P_A)_3(x^k) - x^k \Big), \quad k = 0,1,2,\ldots.
  \end{equation}
\end{itemize}

In all three methods \eqref{numExp:MAP}, \eqref{numExp:DR}, and \eqref{numExp:NEW}, we use the starting point $x_0 = (1,0)$ and calculate 30 iterations. In Figure \ref{figure1}, we visualize the trajectories $\{x^k\}_{k=0}^{30}$ of the generated iterates. In Figure \ref{figure2}, we show the absolute errors $\{\log \|x_k\|\}_{k=0}^{30}$ obtained by the corresponding methods. Figure \ref{figure1} demonstrates quite interesting behavior of the third method. Figure \ref{figure2} verifies linear convergence. Both figures suggest that the projection methods with relaxation parameters $(\lambda, \mu)$ satisfying $\lambda \mu < 4$ can also be considered in a more advanced numerical study.

\begin{figure}[h]
  \centering
  \includegraphics[scale=0.7]{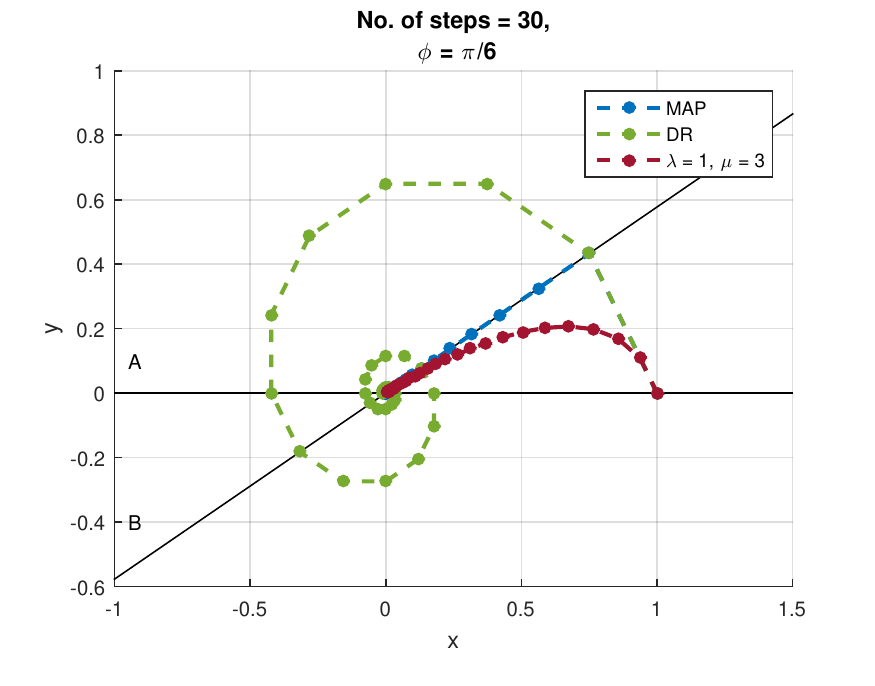}
  \caption{Trajectories $\{x^k\}_{k=0}^{30}$ generated by methods \eqref{numExp:MAP}, \eqref{numExp:DR} and \eqref{numExp:NEW}.}\label{figure1}
\end{figure}

\begin{figure}[h]
  \centering
  \includegraphics[scale=0.7]{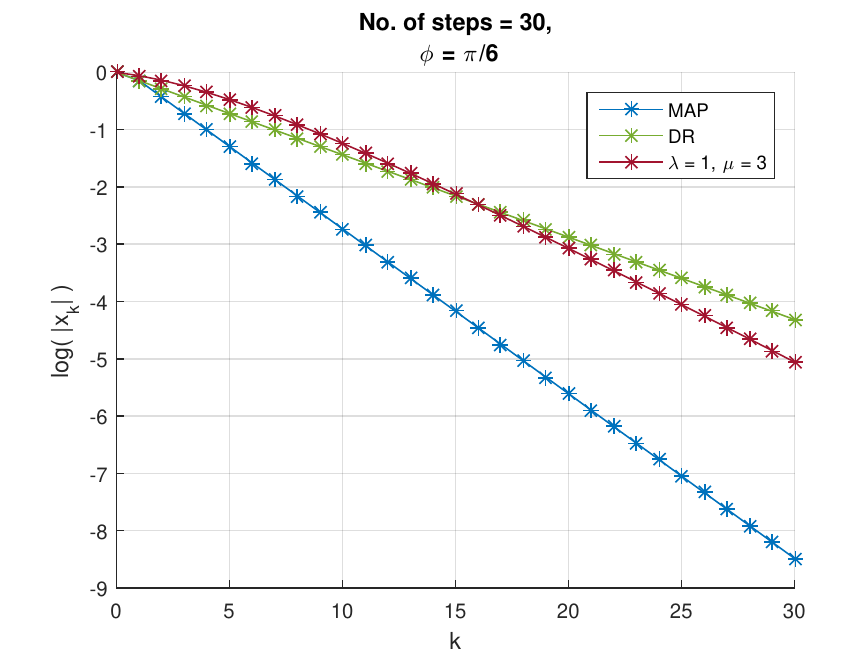}
  \caption{Absolute errors $\{\log \|x_k\|\}_{k=0}^{30}$ obtained by methods \eqref{numExp:MAP}, \eqref{numExp:DR} and \eqref{numExp:NEW}.}\label{figure2}
\end{figure}

\bigskip
\noindent\textbf{Author Contributions.} All the authors contribute equally to this work.

\bigskip
\noindent\textbf{Acknowledgement.} We are grateful to two anonymous referees for their close reading of our manuscript, and for their detailed comments and constructive suggestions.

\bigskip
\noindent\textbf{Funding.} This research was partially supported by the Israel Science Foundation (Grant 820/17), by the Fund for the Promotion of Research at the Technion (Grant 2001893) and by the Technion General Research Fund (Grant 2016723).

\bigskip
\noindent\textbf{Availability of Supporting Data.} Data sharing is not applicable to this article as no data sets were generated or analysed during the current study.

\section*{Declarations}
\addcontentsline{toc}{section}{Declarations}

\textbf{Competing interests.}  The authors declare that they have no conflict of interest.

\bigskip

\noindent\textbf{Ethical Approval.} Not applicable.

\small

\end{document}